\newtheorem*{maintheorem*}{Main Theorem}
\newtheorem{theorem}{Theorem}[section]
\newtheorem{prop}[theorem]{Proposition}
\newtheorem{lem}[theorem]{Lemma}
\newtheorem{cor}[theorem]{Corollary}
\theoremstyle{definition}
\newtheorem{defn}[theorem]{Definition}
\newtheorem{ex}[theorem]{Example}
\newtheorem{question}[theorem]{Question}
\numberwithin{equation}{section}
\newcommand{\cc}{\mathbb{C}}
\newcommand{\ff}{\mathbb{F}}
\newcommand{\nn}{\mathbb{N}}
\newcommand{\pp}{\mathbb{P}}
\newcommand{\qq}{\mathbb{Q}}
\newcommand{\rr}{\mathbb{R}}
\newcommand{\zz}{\mathbb{Z}}
\providecommand\ldb{\llbracket}
\providecommand\rdb{\rrbracket}
\newcommand{\gp}{\text{gp}}
\newcommand{\qf}{\text{qf}}
\newcommand{\uu}{\mathcal{U}}
\newcommand{\norm}[1]{\left\lVert#1\right\rVert}
\keywords{abelian group, commutative group algebra, ACCP, atomic monoid, hereditary atomicity}
\subjclass[2020]{Primary: 20K15, 16S34, 20C07; Secondary: 06F20, 20M25, 13A05}
\begin{document}
	\mbox{}
	\title{Hereditary atomicity and ACCP in abelian groups} 
	
	\author{Felix Gotti}
	\address{Department of Mathematics\\MIT\\Cambridge, MA 02139}
	\email{fgotti@mit.edu}

\date{\today}

\begin{abstract}
	A cancellative and commutative monoid $M$ is atomic if every non-invertible element of~$M$ factors into irreducibles (also called atoms), and $M$ is hereditarily atomic if every submonoid of $M$ is atomic. In addition, $M$ is hereditary ACCP if every submonoid of $M$ satisfies the ascending chain condition on principal ideals (ACCP). Our primary purpose in this paper is to determine which abelian groups are hereditarily atomic. In doing so, we discover that in the class of abelian groups the properties of being hereditarily atomic and being hereditary ACCP are equivalent. Once we have determined the abelian groups that are hereditarily atomic, we will use this knowledge to determine the commutative group algebras that are hereditarily atomic, that is, the commutative group algebras satisfying that all their subrings are atomic. The interplay between atomicity and the ACCP is a subject of current active investigation. Throughout our journey, we will discuss several examples connecting (hereditary) atomicity and the ACCP, including, for each integer $d$ with $d \ge 2$, a construction of a rank-$d$ additive submonoid of $\mathbb{Z}^d$ that is atomic but does not satisfy the ACCP.
\end{abstract}

\dedicatory{Dedicado a mi padre, Juan Gotti, \\ por todo lo que me enseñó y me dejó}

\bigskip
\maketitle

\bigskip
\section{Introduction}
\label{sec:intro}

Following P. Cohn~\cite{pC68}, we say that a cancellative and commutative monoid is atomic provided that every non-invertible element factors into irreducibles (also called atoms), while an integral domain is atomic provided that its multiplicative monoid is atomic. Although many relevant classes of integral domains, including Noetherian domains and Krull domains, consist of atomic domains, until the nineties the notion of atomicity had been studied only in connection to other algebraic properties: for instance, A. Grams~\cite{aG74} and A. Zaks~\cite{aZ82} investigated atomicity in the context of integral domains and in connection with the ascending chain condition on principal ideals (ACCP). However, in the celebrated paper~\cite{AAZ90} by D. D. Anderson, D. F. Anderson, and M.~Zafrullah, published in 1990, the authors proposed a diagram of nested classes of atomic domains as a methodology to study the phenomenon of multiple factorizations in the context of integral domains. This paper, along with the paper~\cite{fHK92} by F. Halter-Koch (where two of the atomic notions introduced in~\cite{AAZ90} were generalized to cancellative monoids and investigated in this more general setting), motivated a significant interest for the notion of atomicity. Since then, atomicity has played a central role in the factorization and ideal theory of cancellative and commutative monoids, which have been not only systematically studied since then but also extended to non-cancellative~\cite{FGKT17} and non-commutative settings~\cite{BS15,aG16}.
\smallskip

As we mentioned before, since the pioneer papers on atomic domains, the phenomenon of atomicity has been often investigated in connection with the ACCP. A cancellative and commutative monoid (or an integral domain) is said to satisfy the ACCP provided that every ascending chain of principal ideals eventually stabilizes. Although one can readily show that every cancellative and commutative monoid (and so every integral domain) satisfying the ACCP is atomic, not every atomic domain satisfies the ACCP. However, none of the known counterexamples are easy to construct. The first of such constructions dates back to 1974 and it was carried out by Grams in~\cite[Section~1]{aG74} to correct the (wrong) assertion stated in~\cite[Proposition~1]{pC68} on the equivalence of atomicity and the ACCP in the class of integral domains. Further examples of atomic domains that do not satisfy the ACCP have been constructed by Zaks~\cite{aZ82} in the eighties and by M. Roitman~\cite{mR93} in the nineties. More recently, J.~Boynton and J. Coykendall~\cite{BC19} constructed a new example using certain pullbacks of commutative rings. Even more recently, various methods to construct further examples were given by B. Li and the author~\cite{GL23}, where they introduced the weak ACCP, an atomic notion that falls strictly between the property of being atomic and that of satisfying the ACCP.
\smallskip

Although atomicity has been systematically studied for more than three decades, the notion of hereditary atomicity has been investigated only recently, by J. Coykendall, R. Hasenauer, and the author~\cite{CGH21}. We say that a cancellative and commutative monoid $M$ (resp., an integral domain $R$) is hereditarily atomic if every submonoid of $M$ (resp.,  subring of $R$) is atomic. In the same vein, we can define the notion of being hereditary ACCP. As one of the primary results in~\cite{CGH21}, the authors characterized the fields that are hereditarily atomic domains. In the last section of the same paper, the authors presented some partial results towards a comparison between being hereditarily atomic and being hereditary ACCP (in the setting of integral domains). In the same section of~\cite{CGH21}, the authors were not able to decide whether being hereditarily atomic and being hereditary ACCP are equivalent properties in the class consisting of all integral domains. It is worth noticing that the same question is still open if one replaces the class of integral domains by that of cancellative and commutative monoids. However, a similar question has been recently answered positively for the class of reduced torsion-free monoids~\cite[Theorem 3.1]{GV23}.
\smallskip

Motivated by the characterization provided in~\cite{CGH21} of the fields that are hereditarily atomic domains, in this paper we will determine the abelian groups that are hereditarily atomic monoids. In addition, we will see that, in the class of abelian groups, the property of being hereditarily atomic and that of being hereditary ACCP are equivalent. This is parallel to the statement conjectured in~\cite{CGH21} about the equivalence of the same two hereditary properties in the context of integral domains. For each integer~$d$ with $d \ge 2$, it turns out that the free abelian group $\zz^d$ is not hereditarily atomic: we will explicitly construct a rank-$d$ atomic additive submonoid of $\zz^d$ that is not hereditarily atomic. Since these monoids are reduced by construction, it follows from \cite[Theorem~3.1]{GV23} that they are (the first known) submonoids of a free abelian group that are atomic but do not satisfy the ACCP. As for the case of abelian groups, we determine the (commutative) group algebras that are hereditarily atomic domains. Along the way, we give some examples and classes of hereditarily atomic monoid algebras, atomic monoid algebras that do not satisfy the ACCP, and hereditary ACCP monoid algebras that are not bounded factorization domains.
\smallskip

Here is a brief roadmap of the results and the structure of this paper. In Section~\ref{sec:background}, we provide most of the notation and terminology as well as some of the results we shall be using in latter sections. In Section~\ref{sec:abelian groups}, we investigate the properties of being hereditarily atomic and being hereditary ACCP in the class of abelian groups. We prove in Theorem~\ref{thm:HA groups} that these two properties are equivalent in the whole class of abelian groups, and we actually determine the abelian groups satisfying the two properties. Then for the most elementary abelian groups that do not satisfy these properties, namely, $\qq$ and $\zz^d$ (for all $d \ge 2$), we exhibit atomic submonoids of such groups that do not satisfy the ACCP (this is done in Example~\ref{ex:atomic PM without ACCP} for $\qq$ and in Proposition~\ref{prop:atomic monoid that is not ACCP} for $\zz^d$). In Section~\ref{sec:commutative group rings}, we investigate the properties of being hereditarily atomic and being hereditary ACCP in the class of commutative group algebras. Using the main result established in Section~\ref{sec:abelian groups}, in Proposition~\ref{prop:group rings} we determine the commutative group algebras that are hereditarily atomic. In contrast to Proposition~\ref{prop:group rings}, we exhibit in Proposition~\ref{prop:antimatter group algebras} three classes of antimatter group algebras, complementing the main constructions of antimatter monoid algebras provided by Anderson et al. in~\cite{ACHZ07}. For the sake of parallelism with Section~\ref{sec:abelian groups} and also to highlight some attractive related questions (Question~\ref{quest:new atomic monoid algebras without ACCP}), in Example~\ref{ex:Gotti-Li's example} we describe a subring of the non-hereditarily atomic commutative group algebra $F[\rr]$ (for any prescribed field $F$) that is atomic but does not satisfy the ACCP (this construction is taken from the recent paper~\cite{GL22} and, therefore, we only offer a brief description, avoiding the most involved technical aspects). In Subsection~\ref{subsec:HACCP and the BFP}, we conclude with a few words about the property of being hereditary ACCP and that of being a bounded factorization domain in the context of commutative group algebras.

\bigskip
\section{Background}
\label{sec:background}

\smallskip
\subsection{General Notation} 

Following standard notation, we let $\zz$, $\qq$, $\rr$, and $\cc$ denote the set integers, rational numbers, real numbers, and complex numbers, respectively. In addition, we let $\nn$, $\nn_0$, and~$\pp$ denote the set of positive integers, nonnegative integers, and primes, respectively. For $p \in \pp$ and $n \in \nn$, we let $\ff_{p^n}$ be the finite field of cardinality $p^n$. For $b,c \in \zz$ with $b \le c$, we let $\ldb b,c \rdb$ denote the set of integers between~$b$ and~$c$, i.e., $\ldb b,c \rdb = \{n \in \zz \mid b \le n \le c\}$. Also, for $S \subseteq \rr$ and $r \in \rr$, we set $S_{\ge r} = \{s \in S \mid s \ge r\}$ and $S_{> r} = \{s \in S \mid s > r\}$.

\smallskip
\subsection{Abelian Groups and Commutative Monoids} 

Let $G$ be an (additive) abelian group. There is a natural group homomorphism $\varphi \colon G \to V$, where $V$ is the $\qq$-vector space $\qq \otimes_\zz G$ and $\varphi \colon g \mapsto 1 \otimes g$. It is clear that the rank of $G$ as $\zz$-module, here denoted by $\text{rank} \, G$, coincides with the dimension of the vector space~$V$. When $G$ is torsion-free, the flatness of~$\qq$ as a $\zz$-module makes $\varphi$ an embedding, and so we can identify $G$ with a subgroup of $V$ via $\varphi$. We say that a subset $\{g_1, \dots, g_k\}$ is an \emph{integrally independent} subset of~$G$ if the only integral combination of $g_1, \dots, g_k$ equaling $0$ in $G$ is the trivial combination (all coefficients taken to be zero). Observe that a finite subset of~$G$ is integrally independent if and only if its image under $\varphi$ is linearly independent in the vector space~$V$. For a total order $\preceq$ on $G$, the pair $(G, \preceq)$ is called a \emph{totally ordered group} provided that $\preceq$ is compatible with the operation of $G$, that is, for any $g, h, k \in G$ the relation $g \preceq h$ guarantees that $g+k \preceq h+k$. If $(G, \preceq)$ is a totally ordered group, then the additive subset $\{g \in G \mid 0 \preceq g\}$ (resp., $\{g \in G \mid 0 \prec g\}$) is called the \emph{nonnegative} (resp., \emph{positive}) \emph{cone} of $(G,\preceq)$. A totally ordered group $(G, \preceq)$ is called an \emph{Archimedean group} provided that for all $g$ and $h$ in the positive cone of $(G, \preceq)$ there exists $n \in \nn$ such that $h \preceq ng$. A theorem by H\"older states that every Archimedean group is isomorphic (as an ordered group) to an additive subgroup of $\rr$.
\smallskip

A semigroup with an identity element is called a \emph{monoid}. Throughout this paper, we will tacitly assume that every monoid we deal with is cancellative and commutative. For the rest of this section, let $M$ be a monoid written additively. We let $M^\bullet$ denote the set of nonzero elements of $M$. The group consisting of all the invertible elements of $M$ is denoted by $\uu(M)$, and $M$ is called \emph{reduced} when $\uu(M)$ is trivial. As for abelian groups, a pair $(M, \preceq)$ is a \emph{totally ordered monoid} if $\preceq$ is a total order on $M$ such that, for all $b,c,d \in M$, the relation $b \preceq c$ implies that $b + d \preceq c + d$. Observe that the nonnegative cone of any totally ordered abelian group is a reduced and totally ordered monoid. The \emph{difference group} of $M$, here denoted by $\gp(M)$, is the unique abelian group (up to isomorphism) satisfying that any abelian group containing a homomorphic image of~$M$ also contains a homomorphic image of $\gp(M)$. The monoid $M$ is called \emph{torsion-free} provided that $\gp(M)$ is a torsion-free abelian group. The \emph{rank} of $M$, denoted by $\text{rank} \, M$, is by definition the rank of $\gp(M)$. A \emph{submonoid} of~$M$ is a subset closed under addition that contains the identity element of $M$. If $S$ is a subset of $M$, then we let $\langle S \rangle$ denote the submonoid of $M$ generated by $S$. If $M = \langle S \rangle$ for some finite set $S$, then $M$ is said to be \emph{finitely generated}. Additive submonoids of $\nn_0$ are always finitely generated; they are well-studied monoids, often referred to as \emph{numerical monoids}.
\smallskip

A non-invertible element $a \in M$ is called an \emph{atom} provided that for all $b,c \in M$ the equality $a = b+c$ implies that either $b \in \uu(M)$ or $c \in \uu(M)$. The set consisting of all the atoms of $M$ is denoted by $\mathcal{A}(M)$. Following Coykendall, Dobbs, and Mullin~\cite{CDM99}, we say that $M$ is \emph{antimatter} if $\mathcal{A}(M)$ is empty. On the other hand, following Cohn~\cite{pC68}, we say that $M$ is \emph{atomic} if every non-invertible element of $M$ can be written as a sum of atoms. Observe that the only monoids that are simultaneously antimatter and atomic are the abelian groups. Hereditary atomicity is one of the notions we are primarily concerned with in this paper.

\begin{defn} \label{def:hereditarily atomic monoid}
A monoid $M$ is \emph{hereditarily atomic} if every submonoid of $M$ is atomic.
\end{defn} 

A subset $I$ of $M$ is an \emph{ideal} of $M$ if $I + M \subseteq I$ or, equivalently, if $I + M = I$. An ideal $I$ of $M$ is \emph{principal} if $I = b + M$ for some $b \in M$. If $c \in b + M$, then we say that~$b$ \emph{(additively) divides} $c$ in~$M$, in which case we write $b \mid_M c$. A submonoid $N$ of $M$ is called \emph{divisor-closed} if for any $b \in M$ and $c \in N$, the divisibility relation $b \mid_M c$ implies that $b \in N$. The monoid $M$ satisfies the \emph{ascending chain condition on principal ideals} (ACCP) if every ascending chain of principal ideals of $M$ eventually stabilizes. If a monoid satisfies the ACCP, then it is atomic \cite[Proposition~1.1.4]{GH06}. An atomic monoid may not satisfy the ACCP, as we shall see in Example~\ref{ex:atomic PM without ACCP} and Proposition~\ref{prop:atomic monoid that is not ACCP}. Since every abelian group has exactly one principal ideal, namely the whole group, every abelian group trivially satisfies the ACCP and is, therefore, atomic. In addition, finitely generated monoids satisfy the ACCP (see, for instance, \cite[Proposition~2.7.8]{GH06}), and so they are atomic. In the direction of Definition~\ref{def:hereditarily atomic monoid}, we say that $M$ is \emph{hereditary ACCP} provided that every submonoid of $M$ satisfies the ACCP.

\smallskip
\subsection{Integral Domains}

Let $R$ be an integral domain. We let $R^\ast$ and $R^\times$ denote the multiplicative monoid and the group of units of $R$, respectively. In addition, we let $\qf(R)$ denote the quotient field of~$R$. An \emph{overring} of~$R$ is an intermediate ring of the ring extension $R \subseteq \qf(R)$. The integral domain~$R$ is \emph{atomic} (resp., satisfies the \emph{ascending chain condition on principal ideals} (ACCP)) provided that its multiplicative monoid $R^*$ is atomic (resp., satisfies the ACCP). We let $\mathcal{A}(R)$ denote the set of all irreducible elements of $R$: the reuse of notation is convenient and justified by the fact that the irreducibles of $R$ are precisely the atoms of the multiplicative monoid $R^*$. The following notion, introduced in~\cite{CGH21}, plays a central role in Section~\ref{sec:commutative group rings}. 

\begin{defn}
	An integral domain $R$ is \emph{hereditarily atomic} if every subring of $R$ is atomic (subrings of $R$ are assumed to contain the identity of $R$).
\end{defn}

\noindent Observe that if the multiplicative monoid $R^*$ of $R$ is hereditarily atomic, then $R$ is a hereditarily atomic integral domain. However, it is worth emphasizing that the converse of this observation does not hold in general.

\begin{ex}
	Since $\zz$ is a Dedekind domain, each overring of $\zz$ is also a Dedekind domain and, therefore, a Noetherian domain. This, along with the fact that every subring of $\qq$ contains $\zz$, allows us to conclude that $\qq$ is a hereditarily atomic integral domain (this is a special case of either \cite[Theorem~4.4]{CGH21} or \cite[Theorem]{rG70}). However, we claim that the submonoid $M := \qq_{\ge 1}$ of the multiplicative monoid of $\qq$ is not atomic: indeed, $M$ is an antimatter monoid. Checking this last assertion amounts to observing that for any $q \in \qq_{> 1}$ we can pick $n \in \nn$ sufficiently large so that $\frac{n}{n+1}q > 1$ and then decompose $q$ in $M$ as $q = \big( \frac{n}{n+1}q \big) \big( \frac{n+1}{n} \big)$. As a consequence, the multiplicative monoid of $\qq$ is not hereditarily atomic.
\end{ex}

For an integral domain $R$ and a monoid $M$, we let $R[x;M]$ denote the ring consisting of all polynomial expressions in an indeterminate $x$ with coefficients in~$R$ and exponents in~$M$ (with addition and multiplication defined as for standard polynomials). Following Gilmer~\cite{rG84}, we will write $R[M]$ instead of $R[x;M]$ provided that we see no risk of ambiguity. As $R[M]$ is an algebra over $R$, the former is often called the \emph{monoid algebra} of $M$ over $R$ or, simply, a \emph{monoid algebra}. From now on, we adopt this terminology. In the special case where $M$ is a group, $R[M]$ is called a \emph{group algebra} (\emph{over}~$R$). Throughout this paper, monoids of exponents of monoid algebras are additively written. When the monoid~$M$ is torsion-free, it follows from \cite[Theorem~8.1]{rG84} that $R[M]$ is an integral domain and, moreover,
\[
	R[M]^\times = \{ux^m \mid u \in R^\times \ \text{and} \ m \in \uu(M)\}
\]
by \cite[Theorem~11.1]{rG84}. If $(M, \preceq)$ is a totally ordered monoid, then one can write each nonzero element $f \in R[M]^*$ uniquely in the following canonical form: $f = \alpha_n x^{d_n} + \dots + \alpha_1 x^{d_1}$ for some coefficients $\alpha_1, \dots, \alpha_n \in R^*$ and exponents $d_1, \dots, d_n \in M$ satisfying that $d_1 \prec \cdots \prec d_n$. In this case, the elements $\deg \, f := d_n$ and $\text{ord} \, f := d_0$ are called the \emph{degree} and the \emph{order} of $f$, respectively. A fair exposition of some of the most relevant advances of monoid algebras until 1984 was given by Gilmer in~\cite{rG84}.

\smallskip
\subsection{Euclidean Geometry and Convexity}

Throughout this subsection, we fix $d \in \nn$. For any $v \in \rr^d$, we let $\norm{v}$ denote the Euclidean norm of~$v$. For $v \in \rr^d$ and a subspace $W$ of $\rr^d$, we let $d(v,W)$ denote the Euclidean distance from $v$ to $W$; that is, $d(v,W) := \min \{ \|v-w\|  \mid w \in W \}$. We always consider the space $\rr^d$ endowed with the topology induced by the Euclidean norm. We denote the standard inner product of $\rr^d$ by $\langle \, , \rangle$; that is, $\langle v,w \rangle = \sum_{i=1}^d v_i w_i$ for all $v = (v_1, \dots, v_d)$ and $w = (w_1, \dots, w_d)$ in $\rr^d$. A nonempty subset $C$ of $\rr^d$ is called a \emph{cone} provided that~$C$ is closed under nonnegative linear combinations. A cone~$C$ is called \emph{pointed} if $C \cap -C = \{0\}$. Let $S$ be a nonempty subset of $\rr^d$. The \emph{conic hull} of $S$, denoted by $\text{cone}_\rr(S)$, is defined as follows:
\[
	\text{cone}_\rr(S) := \bigg\{ \sum_{i=1}^n c_i v_i \ \bigg{|} \ n \in \nn, \ \text{and} \ v_i \in S \ \text{and} \ c_i \in \rr_{\ge 0} \ \text{for every} \ i \in \ldb 1,n \rdb \bigg\},
\]
which means that $\text{cone}_\rr(S)$ is the smallest cone in $\rr^d$ containing $S$. When we see no risk of ambiguity, we write $\text{cone}(S)$ instead of $\text{cone}_\rr(S)$.
\smallskip

Now assume that $d \ge 2$. Take $c_0, \dots, c_{d-1} \in \rr$, and let $H$ be an affine hyperplane of $\rr^d$ described by an equation $x_d =  c_0 + \sum_{i=1}^{d-1} c_i x_i$. The \emph{upper closed affine half-space} determined by $H$ is the set
\[
	H^+ := \bigg\{ (x_1, \dots, x_d) \in \rr^d \ \bigg{|} \  x_d \ge c_0 + \sum_{i=1}^{d-1} c_i x_i \bigg\}.
\]
We define the \emph{lower closed affine half-space} $H^-$ similarly but using the inequality $x_d \le c_0 + \sum_{i=1}^{d-1} c_i x_i$ instead. To easy notation, we call $H^+$ (resp., $H^-$) simply the upper half-space (resp., the lower half-space) determined by $H$.

\bigskip
\section{Abelian Groups}
\label{sec:abelian groups}

We begin considering the property of being hereditarily atomic in the class of abelian groups. In Subsection~\ref{subsec:HA abelian groups}, we determine the abelian groups that are hereditarily atomic. Next, in Subsection~\ref{subsec:inside abelian groups that are not HA}, we take a look at some simple examples of abelian groups that are not hereditarily atomic and, inside them, we identify certain submonoids that are interesting from the atomicity perspective: the submonoids in Proposition~\ref{prop:atomic monoid that is not ACCP} are produced using a novel construction, while the submonoids in parts~(a) and~(c) of Example~\ref{ex:atomic PM without ACCP} have been useful in commutative ring theory (see~\cite[Theorem~1.3]{aG74} and~\cite[Example~3.6]{GL22}, respectively).

\smallskip
\subsection{Hereditarily Atomic Abelian Groups}
\label{subsec:HA abelian groups}

The primary purpose of this section is to determine which abelian groups are hereditarily atomic, which we do in the next theorem. It is well known and not difficult to argue that every submonoid of the abelian group~$\zz$ is either a subgroup of~$\zz$ or a finitely generated monoid. Hence $\zz$ is a hereditarily atomic group. This is not the case for the free abelian group $\zz^2$, as the next example illustrates. For any $d \in \nn$, we refer to the elements of $\zz^d$ as \emph{lattice points} and the submonoids of the free abelian group $\zz^d$ as \emph{lattice monoids}.

\begin{ex} \label{ex:Z^2 is not HA}
	Consider the lattice monoid $M := (\nn \times \zz) \cup (\{0\} \times \nn_0)$ of the rank-$2$ free abelian group $\zz^2$. Observe that $M$ is reduced. Also, it is clear that $a := (0,1)$ is an atom of~$M$ and also that~$a$ divides each nonzero element of~$M$. This, along with the fact that $M$ is reduced, ensures that $\mathcal{A}(M) = \{a\}$. Because $\langle \mathcal{A}(M) \rangle = \{0\} \times \nn_0$, the monoid $M$ is not atomic. Hence the abelian group $\zz^2$ is not hereditarily atomic. Since, for each integer $d$ with $d \ge 3$, we can naturally embed $\zz^2$ as a subgroup of $\zz^d$, we infer that the abelian group $\zz^d$ is not hereditarily atomic.
	
	 We can also arrive to the same conclusion by observing first that $M$ is the nonnegative cone of the totally ordered group $(\zz^2, \preceq)$, where $\preceq$ denotes the lexicographical order (with priority on the first coordinate). Similarly, we can argue that, for each integer $d$ with $d \ge 3$, the nonnegative cone $M_d$ of the totally ordered group $(\zz^d, \preceq)$, where $\preceq$ is the lexicographical order, contains only one atom (namely, the minimum of $M_d^\bullet$), which divides any positive element in $M_d^\bullet$. Therefore $M_d^\bullet$ is a submonoid of $\zz^d$ that is not atomic. Thus, $\zz^d$ is not hereditarily atomic.
\end{ex}

With Example~\ref{ex:Z^2 is not HA} in mind, we proceed to establish the primary result of this section.

\begin{theorem} \label{thm:HA groups}
	Let $G$ be an abelian group, and let $T$ be the torsion subgroup of $G$. Then the following conditions are equivalent.
	\begin{enumerate}
		\item[(a)] $G$ is hereditary ACCP.
		\smallskip
		
		\item[(b)] $G$ is hereditarily atomic.
		\smallskip
		
		\item[(c)] $G/T$ is cyclic.
	\end{enumerate}
\end{theorem}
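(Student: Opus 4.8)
The plan is to establish the cycle of implications $(a) \Rightarrow (b) \Rightarrow (c) \Rightarrow (a)$. The implication $(a) \Rightarrow (b)$ is immediate: since every monoid satisfying the ACCP is atomic \cite[Proposition~1.1.4]{GH06}, if every submonoid of $G$ satisfies the ACCP then every submonoid of $G$ is atomic. Before turning to the remaining two implications, I would record the reduction that, because $T$ is the torsion subgroup of $G$, the quotient $G/T$ is torsion-free; hence $G/T$ is cyclic if and only if it is isomorphic to either the trivial group or $\zz$.

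For $(b) \Rightarrow (c)$ I would argue the contrapositive: assuming $G/T$ is not cyclic, I would produce a non-atomic submonoid of $G$, splitting into two cases according to the rank of $G/T$. If $\text{rank} \, (G/T) \ge 2$, then I can choose $x, y \in G$ whose images in $G/T$ are integrally independent; the map $\zz^2 \to G$ sending $(a,b) \mapsto ax + by$ is then injective, since any integral relation would descend to one in $G/T$, so $\zz^2$ embeds as a subgroup of $G$, and the non-atomic lattice monoid $M = (\nn \times \zz) \cup (\{0\} \times \nn_0)$ of Example~\ref{ex:Z^2 is not HA} transports to a non-atomic submonoid of $G$. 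If instead $\text{rank} \, (G/T) = 1$, then $G/T$ is isomorphic to a non-cyclic subgroup $H$ of $\qq$, which is therefore dense, so $\inf H_{> 0} = 0$. Writing $\pi \colon G \to G/T = H$ for the quotient map, I would consider the reduced submonoid $M := \{g \in G \mid \pi(g) > 0\} \cup \{0\}$. Given any nonzero $g \in M$, density of $H$ near $0$ lets me pick $h \in H$ with $0 < h < \pi(g)$ and lift it to some $g'' \in G$; then $g = g'' + (g - g'')$ expresses $g$ as a sum of two nonzero (hence non-invertible) elements of $M$, so $M$ is antimatter and therefore not atomic. The point of using the preimage of the positive cone here, rather than an embedded copy of $H$, is that the extension $0 \to T \to G \to H \to 0$ need not split, so $H$ itself may fail to embed in $G$.

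For $(c) \Rightarrow (a)$ I would use that a cyclic $G/T$ forces either $G = T$ or, since $\zz$ is free, $G \cong T \oplus \zz$. If $G = T$ is torsion, every element of every submonoid has finite order and is therefore invertible, so every submonoid is a group and trivially satisfies the ACCP. In the case $G \cong T \oplus \zz$, I would fix a submonoid $M$ and examine its image under the projection $\pi \colon G \to \zz$. If $\pi(M)$ is contained in $\nn_0$ (or, symmetrically, in $-\nn_0$), then along any chain of proper divisions $x_i = x_{i+1} + a_i$ the integers $\pi(x_i)$ form a non-increasing sequence in $\nn_0$, which must stabilize; past that point each $a_i$ lies in $T \oplus \{0\}$, is torsion, and hence is invertible in $M$, contradicting that the divisions are proper, so $M$ satisfies the ACCP. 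If instead $\pi(M)$ is a nontrivial subgroup $n\zz$ of $\zz$, then choosing $u, v \in M$ with $\pi(u) = n$ and $\pi(v) = -n$ gives $u + v \in T \oplus \{0\}$, a torsion and hence invertible element, which forces $u$ to be invertible; adding suitable nonnegative multiples of $u$ or of its inverse to an arbitrary element of $M$ to land in the torsion subgroup then shows every element of $M$ is invertible, so $M$ is a group and again satisfies the ACCP.

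I expect the main obstacle to be the $(c) \Rightarrow (a)$ direction, and specifically the case in which the projection of a submonoid $M$ to $\zz$ is a subgroup: the key realization is that this forces $M$ to be a group outright, which is what rules out pathological atomic-but-not-ACCP behavior. The secondary delicate point is the rank-one case of $(b) \Rightarrow (c)$, where the possible non-splitting of the extension defining $G$ means one cannot simply embed a copy of $\qq$ (or of $H$) and must instead build the non-atomic submonoid inside $G$ as the preimage of a positive cone.
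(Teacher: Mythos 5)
Your proof is correct, and in two places it takes a genuinely different route from the paper's. The most significant difference is the rank-one case of (b) $\Rightarrow$ (c): the paper builds an explicit countably generated submonoid $\langle s_n, t_n \mid n \in \nn\rangle$ from a chain $\zz(s_n+T) \subsetneq \zz(s_{n+1}+T)$ exhausting $G/T$, and then must verify by hand---multiplying a putative relation by a large integer to kill the torsion corrections $t_n$---that this monoid is antimatter but not a group. You instead take the full preimage $M = \pi^{-1}(H_{>0}) \cup \{0\}$ of the positive cone of $H = G/T \subseteq \qq$ and use the density of a non-cyclic subgroup of $\qq$ to split any nonzero element of $M$ into two nonzero elements of $M$; since $M$ is reduced and nontrivial, it is antimatter but not a group, hence not atomic. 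This is shorter and sidesteps all the bookkeeping with the $t_n$; your remark that one must use the preimage because the extension $0 \to T \to G \to G/T \to 0$ need not split identifies precisely the obstruction that the paper's lifted generators are designed to handle, and the preimage construction handles it more cleanly. The second difference is in (c) $\Rightarrow$ (a): you invoke the splitting $G \cong T \oplus \zz$ (legitimate, as $\zz$ is free), whereas the paper works with cosets $g + T$ directly and never splits; your two cases ($\pi(M)$ one-sided versus $\pi(M)$ a nontrivial subgroup of $\zz$) match the paper's Cases 2 and 1 exactly, with your direct monotonicity argument on the sequence $\pi(x_i)$ replacing the paper's reduction to a numerical monoid, and with the same key fact in the background (a submonoid of $\zz$ meeting both $\nn$ and $-\nn$ is a subgroup, which the paper records as well known just before Example~\ref{ex:Z^2 is not HA}). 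Both versions of each step are sound; what your approach buys is brevity and transparency in the rank-one case, while the paper's construction has the virtue of producing a small, explicitly generated witness.
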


\begin{proof}
	(a) $\Rightarrow$ (b): This is clear as every monoid satisfying the ACCP is atomic.
	\smallskip
	
	(b) $\Rightarrow$ (c): Assume that $G$ is hereditarily atomic. We first prove that the quotient group $G/T$ has rank at most~$1$, and then we use this information to prove that $G/T$ is cyclic. 
	\smallskip
	
	Suppose, for the sake of a contradiction, that $\text{rank} \, G/T \ge 2$. Let $d$ be the rank of $G/T$, which is also the rank of $G$. Since $d \ge 2$, we can take $u,v \in G$ such that $\{u,v\}$ is an integrally independent subset of~$G$ (in particular, $u$ and $v$ are non-torsion elements of $G$). 
	Consider the totally ordered group $(\zz u + \zz v, \preceq)$, where $\preceq$ is the lexicographical order (with priority on the $u$-coordinate). Now set $M := (\nn u + \zz v) \, \bigcup \, \nn_0 v$, and observe that~$M$ is the nonnegative cone of the totally ordered group $(\zz u + \zz v, \preceq)$.
	Then $M$ is a reduced monoid. We claim that~$M$ is not atomic. Since $v$ is the minimum element of $M^\bullet$, which is the positive cone of $(\zz u + \zz v, \preceq)$, it follows that $v$ divides every nonzero element of $M$. Since $M$ is reduced, no element in $M \setminus \{0, v\}$ can divide $v$ in $M$. Hence $v$ is an atom of $M$. Since $v$ divides any nonzero element of $M$, the fact that $M$ is reduced then guarantees that $\mathcal{A}(M) = \{v\}$. 
	Because $M \neq \nn_0 v$, we conclude that $M$ is not atomic. As a result, $G$ is not hereditarily atomic, contradicting our initial assumption. Hence $\text{rank} \, G/T \le 1$.
	\smallskip
	
	We proceed to argue that $G/T$ is cyclic. Assume, towards a contradiction, that this is not the case. Then $G/T$ cannot be the trivial group and, therefore, $\text{rank} \, G/T = 1$. It is well known that every rank-$1$ torsion-free abelian group is isomorphic to an additive subgroup of $\qq$ (see, for instance, \cite[Section~24]{lF70}). It is also known that every additive subgroup of $\qq$ is either an infinite cyclic groups or a strictly ascending union of infinite cyclic groups. Since $G/T$ is not a cyclic group, we can write
	\[
		G/T = \bigcup_{n \in \nn} \zz (s_n + T),
	\]
	where $(s_n)_{n \ge 1}$ is a sequence with terms in $G$ such that $\zz (s_n + T) \subsetneq \zz (s_{n+1} + T)$ for every $n \in \nn$. Because $G/T$ is not the trivial group, after dropping the first term of $(s_n)_{n \ge 1}$, we can assume that $s_n \notin T$ for any $n \in \nn$. As $\zz(s_n + T) = \zz(-s_n + T)$ for every $n \in \nn$, after replacing $s_n$ by $-s_n$ when necessary, we can assume that $s_n + T \in \nn(s_{n+1} + T)$ for every $n \in \nn$. Take a sequence $(b_n)_{n \in \nn}$ with terms in $\nn_{\ge 2}$ such that, for each $n \in \nn$, the equality $s_n + T = b_{n+1} s_{n+1} + T$ holds, and then write $s_n = b_{n+1} s_{n+1} + t_n$ for some $t_n \in T$. Now consider the submonoid
	\[
		M := \langle s_n, t_n \mid n \in \nn \rangle
	\]
	of~$G$. We claim that the monoid $M$ is not atomic, which will contradict the fact that $G$ is hereditarily atomic. First observe that, for each $n \in \nn$, the equality $s_n = b_{n+1} s_{n+1} + t_n$, along with the fact that $t_n \in T$, guarantees the existence of $b'_n \in \nn$ such that $b'_n s_n = b'_n b_{n+1} s_{n+1}$. As a consequence, the sets $\{s_n \mid n \in \nn\}$ and $\uu(M)$ are disjoint if and only if $s_1 \notin \uu(M)$. To argue that $s_1 \notin \uu(M)$, take $c \in \nn_0$ such that $-cs_1 \in M$, and then write
	\begin{equation} \label{eq:checking atomicity}
		-c s_1 = \sum_{n=1}^N c_n s_n + \sum_{n=1}^N d_n t_n
	\end{equation}
	for some $N \in \nn$ and $c_1, \dots, c_N, d_1, \dots, d_N \in \nn_0$. Since $t_1, \dots, t_N$ are torsion elements and the inclusion $s_n + T \subseteq \nn(s_N + T)$ holds for each $n \in \ldb 1,N \rdb$, after multiplying both sides of~\eqref{eq:checking atomicity} by a large positive integer, we obtain that $-c'_0 s_N = \sum_{n=1}^N c'_n s_N$ for some $c'_0, c'_1, \dots, c'_N \in \nn_0$. Because $s_N \notin T$, the fact that $\big( \sum_{n=0}^N c'_n \big) s_N = 0$ ensures that $c'_0 = c'_1 = \cdots = c'_N = 0$, and so $c=0$ in~\eqref{eq:checking atomicity}. As a result, $-s_1 \notin M$, and so $s_1 \notin \uu(M)$. Therefore none of the elements in $\{s_n \mid n \in \nn\}$ is invertible in~$M$. Thus, $M$ is not a group. Moreover, for each $n \in \nn$, the fact that $s_n \in \nn_{\ge 2} s_{n+1} + t_n$ guarantees that $s_n \notin \mathcal{A}(M)$. On the other hand, for each $n \in \nn$, we see that $t_n \notin \mathcal{A}(M)$ because $t_n \in T \cap M \subseteq \uu(M)$. Hence $\mathcal{A}(M)$ is empty, and so the fact that $M$ is not a group ensures that $M$ is not atomic, yielding the desired contradiction.
	\smallskip
	
	(c) $\Rightarrow$ (a): Suppose that the group $G/T$ is cyclic. Assume first that $G/T$ is the trivial group. In this case, $G$ is a torsion group, and it is well known and easy to verify that every submonoid of $G$ must be, in fact, a subgroup of $G$. As every abelian group trivially satisfies the ACCP, it follows that~$G$ is hereditary ACCP.
	
	Assume, therefore, that $G/T$ is not the trivial group. Since $G/T$ is torsion-free, it cannot be finite. Hence $G/T \cong \zz$. Take $g_0 \in G$ such that $G/T = \zz(g_0+T)$ and then, for any $g \in G$, write $g+T \prec T$ (resp., $g+T \succ T$) if $g+T = ng_0 + T$ for some $n \in -\nn$ (resp., $n \in \nn$). We proceed to prove that every submonoid of $G$ satisfies the ACCP. To do so, we fix a submonoid $M$ of~$G$ and split the rest of the proof into the following two cases.
	\smallskip
	
	\textsc{Case 1:} There exist $b_1, b_2 \in M$ such that $b_1 + T \prec T$ and $b_2 + T \succ T$. We will show that~$M$ is a subgroup of $G$. To do so, fix $b \in M$. If $b \in T$, then it is clear that $b \in \uu(M)$. Suppose, therefore, that $b \notin T$. Hence either $b+T \prec T$ or $b+T \succ T$. Assume first that $b + T \prec T$. In this case, we can pick $m, n \in \nn$ such that $b + T = -mg_0 + T$ and $b_2 + T = ng_0 + T$, from which we obtain that  $m(b_2 + T) + n(b + T) = T$. Then $m b_2 + nb \in T$, which implies that $(mN) b_2 + (nN) b = 0$ for some $N \in \nn$ large enough. Hence $b \in \uu(M)$. The case where $b + T \succ T$ is similar. As a result, $M$ is a subgroup of $G$, and so $M$ satisfies the ACCP.
	\smallskip
	
	\textsc{Case 2:} $b + T \preceq T$ for all $b \in M$ or $b + T \succeq T$ for all $b \in M$. After replacing $M$ by its isomorphic copy $-M$ if necessary, we can assume that $b + T \succeq T$ for all $b \in M$. Set
	\[
		\bar{M} := \{b + T \mid b \in M\}.
	\]
	It is clear that $\bar{M}$ is a submonoid of the quotient group $G/T$ contained in $\nn_0(g_0 + T) \cong \nn_0$. Hence~$\bar{M}$ is isomorphic to a numerical monoid, and so it satisfies the ACCP. To prove that $M$ also satisfies the ACCP, suppose that $(b_n + M)_{n \ge 1}$ is an ascending chain of principal ideals of $M$. For each $n \in \nn$, observe that the inclusion $b_n + M \subseteq b_{n+1} + M$ guarantees that $(b_n + T) + \bar{M} \subseteq (b_{n+1} + T) + \bar{M}$. Thus, $((b_n + T) + \bar{M})_{n \ge 1}$ is an ascending chain of principal ideals of $\bar{M}$ and, therefore, it must eventually stabilize. Take $N \in \nn$ such that $(b_n + T) + \bar{M} = (b_N + T) + \bar{M}$ for every $n \in \nn$ with $n \ge N$. Now fix $k \in \nn$ such that $k \ge N$. Since $\bar{M}$ is a numerical monoid up to isomorphism, it is a reduced monoid. Therefore the equality $b_k + T = b_N + T$ holds, and so $b_N = b_k + t_k$ for some $t_k \in T$. On the other hand, the inclusion $b_N + M \subseteq b_k + M$ ensures that $t_k = b_N - b_k \in M$. As a result, $t_k \in M \cap T \subseteq \uu(M)$, which implies that $b_k + M = b_N + M$. Hence the ascending chain of principal ideals $(b_n + M)_{n \ge 1}$ also stabilizes. As a consequence, the submonoid $M$ of $G$ satisfies the ACCP, as desired.
\end{proof}

\medskip
\subsection{A Look Inside Abelian Groups That Are Not Hereditarily Atomic}
\label{subsec:inside abelian groups that are not HA}
Since $\qq$ is a torsion-free abelian group that is not cyclic, Theorem~\ref{thm:HA groups} ensures the existence of an additive submonoid of~$\qq$ that does not satisfy the ACCP: indeed, the additive monoid $\qq_{\ge 0}$ is not even atomic. What is more interesting is that there exist submonoids of~$\qq$ that are atomic but do not satisfy the ACCP. The following examples shed some light upon this observation (another less natural example of an atomic additive submonoid of $\qq$ that does not satisfy the ACCP was constructed by Coykendall and the author in the proof of~\cite[Proposition~5.1]{CG19}).

\begin{ex} \label{ex:atomic PM without ACCP} \hfill
		\begin{enumerate}
		\item[(a)] Let $(p_n)_{n \ge 1}$ be the strictly increasing sequence whose underlying set is the set consisting of all odd primes. Then consider the submonoid $M := \big\langle \frac{1}{2^{n-1} p_n} \mid n \in \nn \big\rangle$\footnote{This monoid is the fundamental ingredient in Grams' construction of the first atomic domain that does not satisfy the ACCP.} of $\qq$. It is not hard to verify that $M$ is an atomic monoid with $\mathcal{A}(M) =  \big\{ \frac{1}{2^{n-1} p_n} \mid n \in \nn \big\}$. However, $M$ does not satisfy the ACCP as, for instance, $\big( \frac{1}{2^n} + M \big)_{n \ge 1}$ is an ascending chain of principal ideals of~$M$ that does not stabilize.
		\smallskip
		
		\item[(b)] Now let $(p_n)_{n \ge 1}$ be the strictly increasing sequence whose underlying set is $\pp$, and consider the submonoid $M := \big\langle \frac{1}{p_n p_{n+2}} \mid n \in \nn \big\rangle$ of $\qq$. As in the previous example, it is not difficult to verify that $M$ is atomic with $\mathcal{A}(M) = \big\{ \frac{1}{p_n p_{n+2}} \mid n \in \nn \big\}$. On the other hand, $M$ does not satisfy the ACCP. Indeed, since
		\[
			\Big( \frac 1{p_n} + \frac 1{p_{n+1}} \Big) - \Big( \frac 1{p_{n+1}} + \frac 1{p_{n+2}} \Big) = (p_{n+2} - p_n) \frac 1{p_n p_{n+2}} \in M
		\]
		for every $n \in \nn$, the sequence $\big(  \frac 1{p_n} + \frac 1{p_{n+1}} + M \big)_{n \ge 1}$ is an ascending chain of principal ideals of~$M$ that does not stabilize.
		\smallskip
		
		\item[(c)] Let $q$ be a positive rational such that $0 < q < 1$ and $q^{-1} \notin \nn$, and write $q = \frac{a}{b}$ with $a,b \in \nn$ and $\gcd(a,b) = 1$. Now consider the submonoid $M := \langle q^n \mid n \in \nn_0 \rangle$ of $\qq$. It is not difficult to verify that $M$ is atomic (for this, we need the condition $q^{-1} \notin \nn$) with $\mathcal{A}(M) =  \{ q^n \mid n \in \nn_0 \}$. However, as the equality $a q^n = (b-a) q^{n+1} + a q^{n+1}$ holds for every $n \in \nn$, we infer that $\big( a q^n + M \big)_{n \ge 1}$ is an ascending chain of principal ideals of $M$ that does not stabilize. Thus,~$M$ does not satisfy the ACCP.
	\end{enumerate}
\end{ex}

As it is the case with $\qq$, the group $\zz^2$ is a torsion-free abelian group that is not cyclic. Hence, in light of Theorem~\ref{thm:HA groups}, there exists a submonoid of $\zz^2$ that does not satisfy the ACCP: indeed, we have seen in Example~\ref{ex:Z^2 is not HA} that the nonnegative cone of $\zz^2$ with respect to the lexicographic order (with priority on the first coordinate) is a submonoid of $\zz^2$ that is not even atomic. A more subtle question is that of whether $\zz^2$ has a submonoid that is atomic but does not satisfy the ACCP. The rest of this section is devoted to a construction that will provide a positive answer to this question. We start with the following needed lemmas.

\begin{lem} \label{lem:lines and lattice points}
	Let $L$ be a real line through the origin in $\rr^2$. Then for every $\epsilon > 0$ there exists $v \in \zz^2 \setminus \{(0,0)\}$ such that $d(v,L) < \epsilon$.
\end{lem}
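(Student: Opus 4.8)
The plan is to split into two cases according to whether the line $L$ passes through a nonzero lattice point. If $L$ contains some $v \in \zz^2 \setminus \{(0,0)\}$, then $d(v,L) = 0 < \epsilon$ and we are immediately done; this disposes, in particular, of every line of rational slope as well as of the two coordinate axes. Thus the only case that requires genuine work is when $L$ has irrational slope, in which case I would write $L = \{(x, \alpha x) \mid x \in \rr\}$ for some irrational number $\alpha$. (Note that such an $L$ cannot be vertical, as the vertical axis already contains the nonzero lattice point $(0,1)$ and so is covered by the first case.)

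For the irrational case, the key tool is a pigeonhole argument of Dirichlet type. Fix a positive integer $N$ and consider the $N+1$ fractional parts $\{k\alpha\}$ for $k \in \ldb 0, N \rdb$, all of which lie in the interval $[0,1)$. Partitioning $[0,1)$ into the $N$ subintervals of length $1/N$ forces two of these fractional parts, say those of $i\alpha$ and $j\alpha$ with $0 \le i < j \le N$, to land in the same subinterval. Setting $q := j - i \in \ldb 1, N \rdb$ and letting $p := \lfloor j\alpha \rfloor - \lfloor i\alpha \rfloor$, the containment in a common subinterval of length $1/N$ then yields $|q\alpha - p| < 1/N$.

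Now I would take $v := (q,p) \in \zz^2$. Since $q \ge 1$, the point $v$ is nonzero, and writing $L$ as the zero set of the linear form $\alpha x - y$, the standard point-to-line distance formula gives $d(v,L) = \frac{|\alpha q - p|}{\sqrt{\alpha^2 + 1}} \le |\alpha q - p| < \frac{1}{N}$. Hence choosing $N$ large enough that $1/N < \epsilon$ produces a nonzero lattice point within distance $\epsilon$ of $L$, as required.

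The argument presents no serious obstacle; its substance is concentrated entirely in the pigeonhole step, which is the classical proof of Dirichlet's approximation theorem. The only points to treat with care are the preliminary reduction to a line of the form $y = \alpha x$ (so that vertical lines and the coordinate axes do not interfere) and the verification that the produced lattice point $v$ is genuinely nonzero, which is guaranteed by the bound $q \ge 1$.
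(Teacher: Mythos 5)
Your proof is correct and follows essentially the same route as the paper: reduce to the case of irrational slope $\alpha$ and then produce a good rational approximation to $\alpha$, converting it into a nearby lattice point via the point-to-line distance formula. The only difference is cosmetic: the paper cites the density of the fractional parts $\{n\alpha\}$ in $[0,1]$ to get $m\alpha - \lfloor m\alpha\rfloor < \epsilon$ directly, whereas you derive the approximation $|q\alpha - p| < 1/N$ from scratch by Dirichlet's pigeonhole argument, which makes your version self-contained.
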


\begin{proof}
	Fix $\epsilon > 0$. If the slope of $L$ is a rational number or infinite, then it is clear that $L$ must contain a nonzero vector with rational coordinates, and so the statement of the lemma follows. Then we assume that the slope of~$L$ is irrational. Since the slope of $L$ is not $0$, we can pick $\alpha \in \rr$ such that $w := (\alpha,1) \in L$. Observe that $\alpha$ is irrational because the slope of $L$ is irrational. It is well known (and not difficult to verify) that the set $\{n \alpha - \lfloor n \alpha \rfloor \mid n \in \nn\}$ is dense in the interval $[0,1]$. Then there exists $m \in \nn$ such that $m \alpha - \lfloor m \alpha \rfloor < \epsilon$. Setting $v = (\lfloor m \alpha \rfloor, m)$ and taking into account the fact that the points in $L$ are described by the equation $x - \alpha y = 0$, we obtain that
	\[
		d(v,L) = \frac{|x - \alpha y|_{(x,y) = v}}{\sqrt{1 + \alpha^2 }} = \frac{ m\alpha - \lfloor m\alpha \rfloor }{\sqrt{1 + \alpha^2 }} < \epsilon.
	\]
	Hence $v$ is a nonzero lattice point satisfying that $d(v,L) < \epsilon$.
\end{proof}

\begin{lem} \label{lem:line with no lattice point}
	There exists a line that is tangent to the unit circle and does not contain any lattice point.
\end{lem}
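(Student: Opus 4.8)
My plan is to prove this by a cardinality comparison rather than by exhibiting an explicit line. The guiding observation is that there are uncountably many lines tangent to the unit circle, yet only countably many of them can pass through a lattice point.

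First I would record the standard parameterization of the tangent lines. The line tangent to the unit circle at the point $(\cos\theta, \sin\theta)$ is exactly the line with equation $x\cos\theta + y\sin\theta = 1$, and distinct values of $\theta \in [0, 2\pi)$ give distinct lines. Consequently the set $\mathcal{T}$ of all lines tangent to the unit circle is in bijection with the circle itself and is therefore uncountable.

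Next I would bound, for each fixed lattice point $P \in \zz^2$, the number of lines in $\mathcal{T}$ passing through $P$. Here I would invoke the elementary geometric fact that through any point of the plane there pass exactly two, one, or zero lines tangent to a given circle, according as the point lies outside, on, or inside the circle. Thus every lattice point lies on at most two tangent lines. Since $\zz^2$ is countable, the collection of tangent lines that meet at least one lattice point is a countable union of finite sets, hence countable. Comparing this with the uncountability of $\mathcal{T}$ forces the existence of a tangent line containing no lattice point, which is precisely the assertion.

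The argument is short, and the one point that genuinely requires care is the claim that each lattice point lies on only finitely many tangent lines; I expect this to be the crux of the proof. However, it reduces immediately to the standard count of tangent lines through a point (at most two), so the substantive work is merely to set up the countable-versus-uncountable dichotomy correctly and to confirm that the parameterization above really does produce uncountably many pairwise distinct lines.
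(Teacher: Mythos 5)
Your proof is correct and follows essentially the same route as the paper: both arguments compare the uncountable family of tangent lines to the unit circle against the countable set of lattice points, using the fact that each point of the plane lies on at most two tangent lines. The only cosmetic difference is that the paper first restricts to tangent lines of irrational slope (so each such line contains at most one lattice point and one can define a map from lines to lattice points with fibers of size at most two), whereas you count directly by observing that the tangent lines meeting some lattice point form a countable union of finite sets; the substance is identical.
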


\begin{proof}
	Let $\mathcal L$ be the set consisting of all the real lines in $\rr^2$ with irrational slopes that are tangent to the unit circle at tangent points having positive $y$-coordinates (i.e, tangent to the upper unit circle). There is a natural bijection between $\mathcal L$ and $\rr \setminus \qq$ and, therefore,~$\mathcal L$ is uncountable. Observe that no line $\ell \in \mathcal L$ can contain two distinct lattice points as, otherwise, $\ell$ would have a rational slope. Suppose, by way of contradiction, that every line $\ell \in \mathcal L$ contains at least one lattice point $P_\ell$, and so exactly one lattice point. Hence the assignments $\ell \mapsto P_\ell$ (for all $\ell \in \mathcal{L}$) induce a function $f \colon \mathcal L \to \zz^2$ satisfying that $|f^{-1}(P)| \le 2$ for all $P \in \zz^2$. Thus, the fact that $\zz^2$ is countable, in tandem with the equality $\mathcal L = \bigcup_{P \in \zz} f^{-1}(P)$, would imply that $\mathcal L$ is countable, which is a contradiction. As a result, there is a line in $\mathcal L$ that does not intersect $\zz^2$.
\end{proof}

We proceed to construct, for each $d \in \nn_{\ge 2}$, a rank-$d$ lattice monoid inside $\zz^d$ that is atomic but does not satisfy the ACCP.

\begin{prop}  \label{prop:atomic monoid that is not ACCP}
	For each $d \in \nn_{\ge 2}$, there exists a reduced rank-$d$ submonoid of $\zz^d$ that is atomic but does not satisfy the ACCP.
\end{prop}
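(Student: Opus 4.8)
The plan is to first settle the rank-$2$ case and then bootstrap to arbitrary $d \ge 2$. For the bootstrap, given a reduced rank-$2$ submonoid $N \subseteq \zz^2$ that is atomic but not ACCP, I would take $M := N \times \nn_0^{d-2} \subseteq \zz^2 \times \zz^{d-2} = \zz^d$. A finite direct sum of reduced atomic monoids is reduced and atomic (its atoms being those of the factors, embedded coordinatewise, since in the reduced setting $(a,b)=(a,0)+(0,b)$ forces one coordinate to be invertible), so $M$ is a reduced atomic monoid of rank $2 + (d-2) = d$; and any non-stabilizing ascending chain $(b_n + N)_{n \ge 1}$ of principal ideals of $N$ lifts to the non-stabilizing chain $\big((b_n,0) + M\big)_{n \ge 1}$, whence $M$ fails the ACCP. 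Thus everything reduces to the case $d = 2$.

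For $d = 2$, the governing idea is to place the monoid inside a closed half-plane whose boundary is a line $L$ through the origin of \emph{irrational} slope; write $\phi$ for a linear functional vanishing exactly on $L$ and positive on the open half-plane. Because the slope of $L$ is irrational, $L \cap \zz^2 = \{(0,0)\}$, so any submonoid $M$ of $\zz^2$ whose nonzero elements all satisfy $\phi > 0$ is automatically reduced. Two structural facts frame the construction. First, $M$ cannot be contained in a pointed cone without becoming a bounded factorization monoid: a rational functional in the interior of the dual cone is $\ge 1$ on every nonzero lattice point of the cone and hence bounds factorization lengths. So, to violate the ACCP, $\cone(M)$ must be non-pointed, i.e.\ the full half-plane, which forces $M$ to reach toward \emph{both} boundary rays of $L$. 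Second, taking \emph{all} lattice points with $\phi > 0$ yields an antimatter monoid, since $\phi(\zz^2)$ is dense in $\rr$ and so every point splits off a point of strictly smaller positive $\phi$-value. The construction must therefore live strictly between these extremes.

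Concretely, I would use Lemma~\ref{lem:lines and lattice points} to select, approaching each of the two boundary rays of $L$, a \emph{sparse} sequence of lattice points whose $\phi$-values decrease to $0$ (such points necessarily exist, and a point of small positive $\phi$ automatically lies far out along one of the rays), and let $M$ be the submonoid they generate. The decisive reduction is that \emph{a monoid generated by atoms is automatically atomic}: every nonzero element is a finite sum of generators, so if each generator is an atom then every element is a sum of atoms. Atomicity thus reduces entirely to showing that no generator is a nonnegative integer combination of the generators with total multiplicity at least two. This is where Lemma~\ref{lem:line with no lattice point} enters: a line missing the lattice serves as a forbidden level set for a companion functional, certifying that the sparse generators admit no nontrivial subrelation and hence are indecomposable. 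At the same time the generators must satisfy \emph{some} relation — otherwise $M$ would be free and hence ACCP — and such relations, read off from the $\zz$-linear dependencies that necessarily occur among rank-$2$ lattice vectors, are exactly what produces an explicit non-stabilizing chain $b_1 + M \subsetneq b_2 + M \subsetneq \cdots$ with $\phi(b_{n+1}) < \phi(b_n)$ for all $n$.

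The main obstacle is this balancing act in the choice of generators: they must be sparse and Diophantine-generic enough that none of them decomposes (so that $M$ is atomic rather than antimatter), yet interlocked enough to carry an infinite ascending chain of principal ideals (so that $M$ is not free, hence not ACCP). Calibrating this tension is precisely the purpose of the two geometric lemmas — Lemma~\ref{lem:lines and lattice points} supplying the small-$\phi$ generators needed to defeat the ACCP, and Lemma~\ref{lem:line with no lattice point} supplying the indecomposability needed for atomicity. I expect verifying that \emph{every} generator is an atom to be the most delicate step, since it must hold uniformly across the entire sparse family near both rays.
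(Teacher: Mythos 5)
Your overall architecture coincides with the paper's: reduce to $d=2$ via $M \times \nn_0^{d-2}$ (with the rank-$2$ factor divisor-closed, so failure of the ACCP passes up), work inside the closed half-plane bounded by a line of irrational slope so that the monoid is reduced and its conic hull is non-pointed, generate by lattice points whose distances to the boundary line decrease to $0$, and extract the non-stabilizing chain from built-in relations. Your framing observations (containment in a pointed cone forces the bounded factorization property and hence the ACCP; taking all lattice points in the open half-plane gives an antimatter monoid; atomicity of a generated monoid reduces to every generator being an atom) are all correct and match the paper's implicit motivation.

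However, there is a genuine gap at exactly the point you flag as ``the most delicate step'': you never construct the generators or verify that they are atoms, and the mechanism you propose for that verification is not the one that works. Lemma~\ref{lem:line with no lattice point} does not act as a forbidden level set ``certifying that the sparse generators admit no nontrivial subrelation''; on the contrary, the construction deliberately builds in the relations $m_{2n}a_{2n} = a_{2n+1} + m_{2n+2}a_{2n+2}$ (these are what destroy the ACCP), and the lattice-point-free tangent line $L$ is used quantitatively: $m_{2n+2} := \min\{m \in \nn \mid m a_{2n+2} \in L^+\}$ is at least $2$ precisely because $L$ misses the lattice, $d(m_{2n}a_{2n},L)>0$ for the same reason, and tangency to the unit circle makes the slab between $L_0$ and $L$ have width $1$, which yields the bound $\norm{p_u(w+a_j)}<2$ driving the paper's Claim~1. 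The real difficulty your sketch does not address is that each $a_k$ must \emph{remain} an atom after all later generators are adjoined. The paper propagates the invariant $\mathcal{A}(M_{2n}) = \{a_0,\dots,a_{2n}\}$ by (i) fixing a uniform threshold $\ell$ so that $\norm{p_v(w)}>\ell$ forces $w+a_i \notin \cone(M_{2n})$, (ii) choosing $a_{2n+2}$ with $\norm{p_v(a_{2n+2})}$ exceeding both $\ell$ and $\norm{p_v(m_{2n}a_{2n})}$, and (iii) separately ruling out that $a_{2n+1}$ or $a_{2n+2}$ divides an earlier generator inside $M_{2n+2}$, which requires eliminating combinations that use \emph{both} new generators via the substitution $a_{2n+1}=m_{2n}a_{2n}-m_{2n+2}a_{2n+2}$. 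Without these quantitative choices, ``sparse and Diophantine-generic'' is an aspiration rather than a proof: a careless sparse choice of generators approaching both rays can easily produce a decomposable generator, so the heart of the argument is still missing.
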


\begin{proof}
	We first prove the statement of the proposition for the case $d=2$. Once we have produced an atomic submonoid of $\zz^2$ that does not satisfy the ACCP, the rest will follow easily.
	
	By virtue of Lemma~\ref{lem:line with no lattice point}, there exists a line $L$ in $\rr^2$ that is tangent to the upper-half of the unit circle and does not intersect $\zz^2$. By using symmetry we can further assume that the tangent point of~$L$ and the unit circle belongs to the second quadrant, and so that~$L$ has a positive slope. Let~$u$ be the unit vector determined by this tangent point, and let $v$ be the unit vector that is orthogonal to~$u$ and lies in the first quadrant. Now set $L_0 := \rr v$. We will construct a lattice monoid $M$ contained in the upper half-space $L_0^+$ determined by $L_0$. Let $p_u$ and $p_v$ denote the canonical projections of $\rr^2$ onto the one-dimensional subspaces $\rr u$ and $\rr v$, respectively.
	\smallskip
	
	First, we construct inductively a sequence $(a_n)_{n \ge 0}$ with terms in $\zz^2 \cap L_0^+$ satisfying certain conditions, and then we will show that the lattice monoid generated by the set $\{a_n \mid n \in \nn_0\}$ is atomic but does not satisfy the ACCP. Take $a_0 = (0,1) \in \zz^2 \cap L_0^+$. Notice that $a_0 \in L^-$. Set
	\[
		m_0 :=  \min\{m \in \nn \mid m a_0 \in L^+\}.
	\]
	Observe that $d(m_0 a_0, L) > 0$ because $L$ does not contain any lattice point. By Lemma~\ref{lem:lines and lattice points}, we can take $a_2 \in \zz^2$ in such a way that $d(a_2,L_0) < \frac 12 d(m_0a_0, L)$. After replacing~$a_2$ by $-a_2$ if necessary, we can assume that $a_2 \in \zz^2 \cap L_0^+$. Now we set
	\[
		m_2 :=  \min\{m \in \nn \mid m a_2 \in L^+\} \quad \text{ and } \quad a_1 := m_0a_0 - m_2a_2.
	\]
	Figure~\ref{fig:AtomicLatticeMonoidNotACCP} illustrates the vectors and affine lines we have introduced so far.
	\begin{figure}[h]
		\includegraphics[width=8cm]{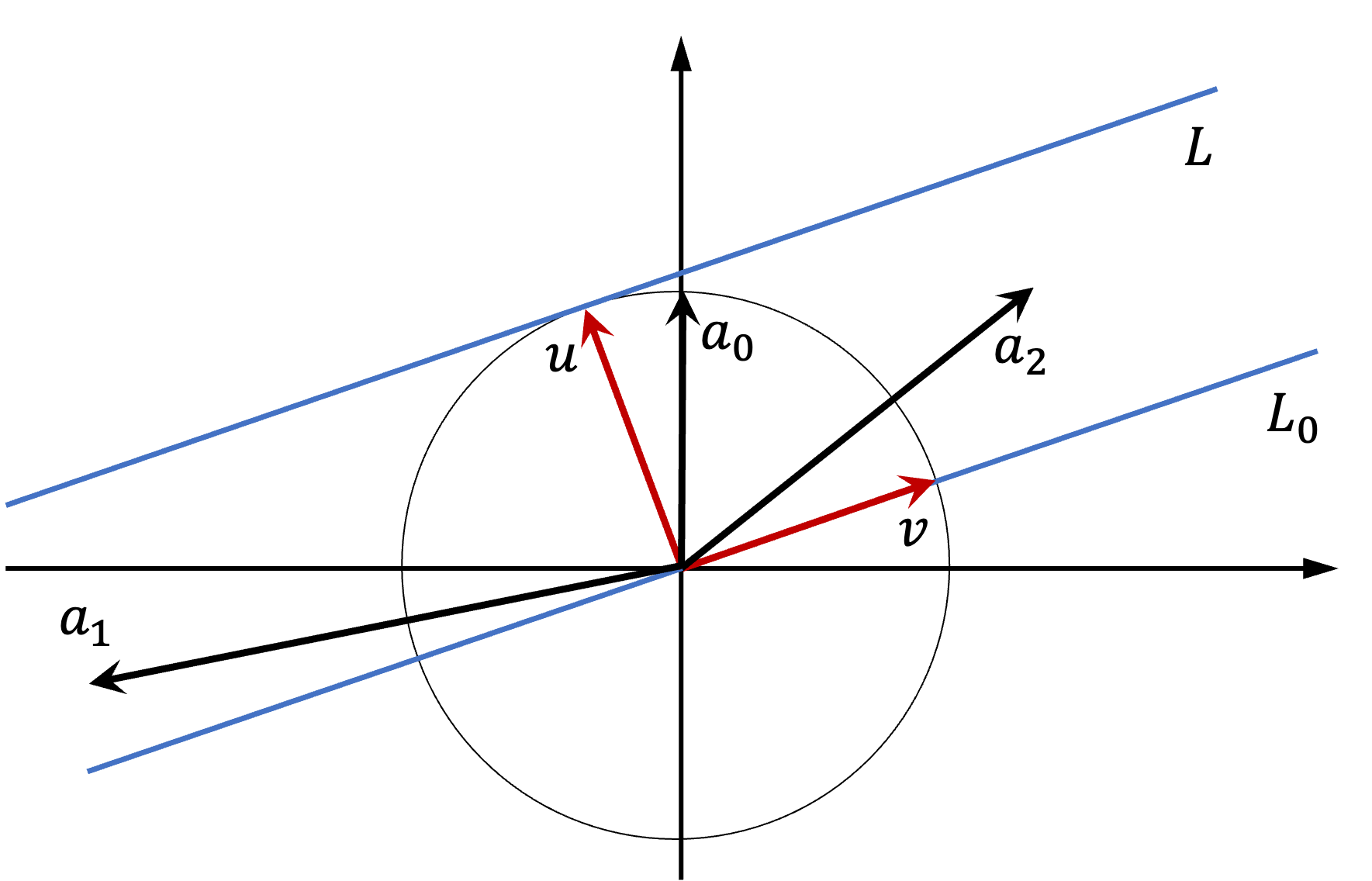}
		\caption{Vectors $a_0, a_1, a_2$ in the base case of the inductive construction of the sequence $(a_n)_{n \ge 1}$.}
		\label{fig:AtomicLatticeMonoidNotACCP}
	\end{figure}
	The minimality of $m_2$ guarantees that $d(m_2 a_2, L) < d(a_2, L_0)$. Then $d(m_2 a_2, L) < d(m_0a_0, L)$, from which we can deduce that $m_0a_0 - m_2a_2 \in L_0^+ \setminus L_0$ and, therefore, $a_1 \in \zz^2 \cap L_0^+$ and $d(a_1, L_0) > 0$. In addition, we can check that
	\begin{equation} \label{eq:non-ACCP 1}
		d(a_1, L_0) = \norm{p_u(m_0 a_0 - m_2 a_2)} = d(m_0 a_0, L) - d(m_2 a_2, L) < d(m_0 a_0, L) < d(a_0,L_0),
	\end{equation}
	where the last inequality follows from the minimality of $m_0$. Also, because $d(a_2,L_0) < \frac 12 d(m_0a_0, L)$ and $m_0a_0 = a_1 + m_2 a_2$, we see that
	\begin{align} \label{eq:non-ACCP 2}
		2d(a_2, L_0) &< d(m_0 a_0, L) = d(m_0 a_0, L_0) - 1 = \norm{p_u(a_1 + m_2 a_2)} - 1 \\
							\label{eq:non-ACCP 3}   &=\norm{p_u(a_1)} + (\norm{p_u(m_2 a_2)} - 1) = d(a_1, L_0) + d(m_2 a_2, L) < d(a_1, L_0) + d(a_2,L_0),
	\end{align}
	where the last inequality follows from the minimality of $m_2$. Thus, $d(a_2, L_0) < d(a_1, L_0) < d(a_0,L_0)$. Now consider the monoid $M_2 := \langle a_0, a_1, a_2 \rangle$. As $d(a_2, L_0) < d(a_0, L_0)$, from the fact $a_2 \in \zz^2 \cap L_0^+$ we infer that $a_2 \notin \rr a_0$. Therefore $\{a_0, a_2\}$ is a basis for $\rr^2$ and, as a result, the equality $a_1 = m_0a_0 - m_2a_2$ guarantees that $a_1 \notin \langle a_0, a_2 \rangle$. Hence $a_1 \in \mathcal{A}(M_2)$. Similarly, we can verify that $a_2 \in \mathcal{A}(M_2)$. Now observe that because $\{a_0, a_2\}$ is a basis for $\rr^2$, the equality $m_0a_0 = a_1 + m_2 a_2$ ensures that $\{a_1, a_2\}$ is also a basis for $\rr^2$. Thus, the inequality $m_0 \ge 2$, in tandem with the equality $a_0 = \frac{1}{m_0}a_1 + \frac{m_2}{m_0} a_2$, guarantees that $a_0 \notin \langle a_1, a_2 \rangle$. This implies that $a_0 \in \mathcal{A}(M_2)$. Hence $\mathcal{A}(M_2) = \{a_0, a_1, a_2\}$.
	
	Now suppose that, for some $n \in \nn$, we have constructed a finite sequence $(a_k)_{k \in \ldb 0, 2n \rdb}$ of lattice points in $\zz^2 \cap L_0^+$ and a finite sequence $(m_{2k})_{k \in \ldb 0,n \rdb}$ of positive integers in $\nn_{\ge 2}$ such that
	the following conditions hold:
	 \begin{enumerate}
	 	\item $a_{2k+1} = m_{2k} a_{2k} - m_{2k+2} a_{2k+2}$ for every $k \in \ldb 0, n-1 \rdb$,
	 	\smallskip
	 	
	 	\item $m_{2k+2} = \min\{m \in \nn \mid m  a_{2k+2} \in L^+ \}$ for every $k \in \ldb 0, n-1 \rdb$,
	 	\smallskip
	 	
	 	\item $d(a_{k+1}, L_0) < d(a_k, L_0)$ for every $k \in \ldb 0, 2n-1 \rdb$, and 
	 	\smallskip
	 	
	 	\item $M_{2n} := \langle a_k \mid k \in \ldb 0,2n \rdb \rangle$ is an atomic monoid with $\mathcal{A}(M_{2n}) := \{a_k \mid k \in \ldb 0, 2n \rdb \}$.
	 \end{enumerate}
 	
 	\smallskip
 	 \noindent {\it Claim 1.} There exists $\ell \in \nn$ such that, for any $w \in L_0^+ \cap L^-$, the inequality $\norm{p_v(w)} > \ell$ implies that $w + a_i \notin \text{cone}(M_{2n})$ for any $i \in \ldb 0, 2n \rdb$.
 	\smallskip
 	
 	\noindent {\it Proof of Claim 1.} Take $\ell \in \nn$ large enough so that
 	\[
 		4 \frac{\max \{ \norm{p_v(a_i)} \mid i \in \ldb 0, 2n \rdb \}}{ \min \{ \norm{p_u(a_i)} \mid i \in \ldb 0,2n \rdb \}} < \ell.
 	\]
 	Now assume, towards a contradiction, that there exists $w \in L_0^+ \cap L^-$ with $\norm{p_v(w)} > \ell$ such that $w + a_j \in \text{cone}(M_{2n})$ for some $j \in \ldb 0, 2n \rdb$. Write $w + a_j = \sum_{i=0}^{2n} r_i a_i$ for some $r_0, \dots, r_{2n} \in \rr_{\ge 0}$, and observe that
	\[
		 \min \big\{ \norm{p_u(a_i)} \mid i \in \ldb 0,2n \rdb \big\} \sum_{i=0}^{2n} r_i \le  \sum_{i=0}^{2n} r_i \norm{ p_u(a_i) } = \norm{ p_u\bigg( \sum_{i=0}^{2n} r_i a_i \bigg)} = \norm{  p_u(w + a_j) } < 2.
 	\]
 	Therefore $\sum_{i=0}^{2n} r_i < \frac 2{  \min \{ \norm{p_u(a_i)} \mid i \in \ldb 0,2n \rdb \}  }$, and we can use this inequality to infer that
 	\begin{equation} \label{eq:lastone0}
 		2 \norm{p_v(w + a_j)} = 2 \sum_{i=0}^{2n} r_i \norm{ p_v(a_i)} \le 2 \max \big\{\norm{p_v(a_i)} \mid i \in \ldb 0, 2n \rdb \big\}  \sum_{i=0}^{2n} r_i < \ell.
 	\end{equation}
 	In addition, we see that $\norm{p_v(w)} > \ell > 2 \max \{ \norm{p_v(a_i)} \mid i \in \ldb 0, 2n \rdb \} \ge 2 \norm{ p_v(a_j)}$, and using these inequalities we can verify that
 	\begin{equation} \label{eq:lastone1}
 		\norm{p_v(w)} < \norm{p_v(w)} + \big( \norm{ p_v(w)} - 2 \norm{ p_v(a_j)} \big) \le 2 \norm{ p_v(w + a_j)}. 
 	\end{equation}
 	Putting together the inequalities in~\eqref{eq:lastone0} and~\eqref{eq:lastone1}, we obtain that $\norm{p_v(w)} < \ell$, which contradicts our initial assumption. Hence we have established Claim~1.
 	\smallskip
	 
	 Let $\ell$ be as in Claim~1. Because $m_{2n} a_{2n} \notin L$, Lemma~\ref{lem:lines and lattice points} allows us to take $a_{2n+2} \in \zz^2$ in such a way that $d(a_{2n+2}, L_0) < \frac 12 d(m_{2n} a_{2n}, L)$. After replacing $a_{2n+2}$ by $-a_{2n+2}$ if necessary, we can assume that $a_{2n+2} \in \zz^2 \cap L_0^+ \cap L^-$. Also, we can assume that $\norm{a_{2n+2}}$ is sufficiently large so that
	 \begin{equation} \label{eq:aux bound}
	 	\norm{p_v(a_{2n+2})} > \max \big\{ \ell, \norm{p_v(m_{2n} a_{2n})} \big\}.
	 \end{equation}
 	Now set
 	\[
 		m_{2n+2} := \min \{ m \in \nn \mid m a_{2n+2} \in L^+ \} \quad \text{and} \quad a_{2n+1} = m_{2n} a_{2n} - m_{2n+2} a_{2n+2} 
 	\]
	 to extend conditions~(1) and~(2) above to the interval $\ldb 0,n \rdb$. As $L$ does not contain any lattice points, $a_{2n+2} \in L^- \setminus L$, whence $m_{2n+2} \ge 2$. In light of the inequality $d(a_{2n+2}, L_0) < \frac 12 d(m_{2n} a_{2n}, L)$ and the minimality of both $m_{2n}$ and $m_{2n+2}$ (here we are using condition~(2) above for $k = n-1$), we can proceed as we did in~\eqref{eq:non-ACCP 1} and~\eqref{eq:non-ACCP 2}--\eqref{eq:non-ACCP 3} to verify that $ d(a_{2n+2}, L_0) < d(a_{2n+1}, L_0) < d(a_{2n}, L_0)$, which extends condition~(3) above to the interval $\ldb 0, 2n+1 \rdb$.
	 In addition,
	 \begin{align*}
	 	\norm{p_v(a_{2n+1})} &= \norm{p_v(m_{2n} a_{2n}) - p_v(m_{2n+2} a_{2n+2})} \\
	 										  &\ge \norm{p_v(m_{2n+2} a_{2n+2})} - \norm{p_v(m_{2n} a_{2n})} >  \norm{p_v(a_{2n+2})},
	 \end{align*}
	 where the last inequality is obtained from the facts that $m_{2n+2} \ge 2$ and $\norm{p_v(a_{2n+2})} > \norm{p_v(m_{2n} a_{2n})}$ (by~\eqref{eq:aux bound}). Therefore the following inequality holds:
	 \begin{equation} \label{eq:non-ACCP 3'}
	 	\norm{p_v(a_{2n+1})} > \ell > 4 \frac{\max \{ \norm{p_v(a_i)} \mid i \in \ldb 0, 2n \rdb \}}{ \min \{ \norm{p_u(a_i)} \mid i \in \ldb 0,2n \rdb \}}.
	 \end{equation}
	 In order to complete our inductive construction of the sequences $(a_n)_{n \ge 0}$ and $(m_{2n})_{n \ge 0}$, we only need to extend condition~(4) above to $2n+2$; that is, we need to argue the following claim for the monoid $M_{2n+2} := \langle a_i \mid i \in \ldb 0, 2n+2 \rdb \rangle$.
	 \smallskip
	 
	 \noindent {\it Claim 2.} $\mathcal{A}(M_{2n+2}) = \big\{ a_i \mid i \in \ldb 0, 2n+2 \rdb \big\}$.
	 \smallskip
	 
	 \noindent {\it Proof of Claim 2.} We will show that $a_{2n+1} \notin \text{cone}(M_{2n} \cup \{a_{2n+2}\})$ and $a_{2n+2} \notin \text{cone}(M_{2n} \cup \{a_{2n+1}\})$. Suppose, towards a contradiction, that $a_{2n+1} \in \text{cone}(M_{2n} \cup \{a_{2n+2}\})$, and write $a_{2n+1} = b + r a_{2n+2}$ for some $b \in \text{cone}(M_{2n})$ and $r \in \rr_{\ge 0}$. Because $a_{2n+1} = m_{2n} a_{2n} - m_{2n+2} a_{2n+2}$,
	 \[
	 	\Big( 1 + \frac{r}{m_{2n+2}} \Big) a_{2n+1} = b + \frac{r m_{2n}}{m_{2n+2}} a_{2n} \in \text{cone}(M_{2n}),
	 \]
	 which implies that $a_{2n+1} \in \text{cone}(M_{2n})$. Thus, $a_{2n+1} + a_1 \in \text{cone}(M_{2n})$, and so it follows from Claim~1 that  $\norm{p_v(a_{2n+1})} \le \ell$. However, this contradicts~\eqref{eq:non-ACCP 3'}. Hence $a_{2n+1} \notin \text{cone}(M_{2n} \cup \{a_{2n+2}\})$. In a completely similar way, we can check that $a_{2n+2} \notin \text{cone}(M_{2n} \cup \{a_{2n+1}\})$. As a consequence, $a_{2n+1}, a_{2n+2} \in \mathcal{A}(M_{2n+2})$. 
	 
	 We proceed to argue that $a_{2n+2}$ (resp., $a_{2n+1}$) does not divide any of the elements $a_0, \dots, a_{2n}$ in the monoid $\langle M_{2n} \cup \{a_{2n+2}\} \rangle$ (resp., $\langle M_{2n} \cup \{a_{2n+1}\} \rangle$). Suppose first, by way of contradiction, that $a_{2n+2}$ divides an element $a \in \{a_i \mid i \in \ldb 0, 2n \rdb \}$ in $\langle M_{2n} \cup \{a_{2n+2}\} \rangle$, and write $a = c a_{2n+2} + \sum_{i=1}^m b_i$ for some $c \in \nn$ and $b_1, \dots, b_m \in \{a_i \mid i \in \ldb 0, 2n \rdb \}$. Then we see that
	 \begin{equation} \label{eq:aux 5}
	 	c \norm{p_v(a_{2n+2})} = \norm{p_v(a) - \sum_{i=1}^m p_v(b_i)} \le (m +1) \max \big\{ \norm{p_v(a_i)} \mid i \in \ldb 0, 2n \rdb \big\}.
	 \end{equation}
	 Therefore
	 \begin{align} \label{eq:non-ACCP 4}
	 	1 + d(a,L_0) &= 1 + \norm{ p_u(a) } > 1 + \sum_{i=1}^m \norm{p_u(b_i) } > (m + 1) \norm{ p_u(a_{2n}) } \\
	 						 &\ge  \frac{ \norm{p_v(a_{2n+2})} \cdot \norm{ p_u(a_{2n}) }}{  \max\{ \norm{p_v(a_i)} \mid i \in \ldb 0, 2n \rdb \} } >   \frac{ 4 \, \norm{ p_u(a_{2n}) }}{  \min \{ \norm{p_u(a_i)} \mid i \in \ldb 0,2n \rdb \} } = 4,
	 \end{align}
	 where the second inequality follows from condition~(3) above (because $\norm{p_u(a_i)} = d(a_i, L_0)$ for every $i \in \ldb 0, 2n \rdb$), the third inequality follows from~\eqref{eq:aux 5}, and the fourth inequality follows from~\eqref{eq:aux bound}. As a result, we obtain that $d(a, L_0) > 1$, which is a contradiction. Hence $a_{2n+2}$ does not divide any of the elements $a_0, \dots, a_{2n}$ in the monoid $\langle M_{2n} \cup \{a_{2n+2}\} \rangle$. Similarly, but using~\eqref{eq:non-ACCP 3'} instead of~\eqref{eq:aux bound} at the end of the argument, we can prove that $a_{2n+1}$ does not divide any of the elements $a_0, \dots, a_{2n}$ in the monoid $\langle M_{2n} \cup \{a_{2n+1}\} \rangle$.	
	 
	 Let us argue now that neither $a_{2n+1}$ nor $a_{2n+2}$ divide any of the elements $a_0, \dots, a_{2n}$ in the larger monoid $M_{2n+2}$. Assume, for the sake of a contradiction, that one of the atoms $a_{2n+1}$ or $a_{2n+2}$ divides an element $a \in \{a_i \mid i \in \ldb 0, 2n \rdb \}$ in $M_{2n+2}$. Since $a_{2n+2}$ does not divide any of the elements $a_0, \dots, a_{2n}$ in the monoid $\langle M_{2n} \cup \{a_{2n+2}\} \rangle$ and $a_{2n+1}$ does not divide any of the elements $a_0, \dots, a_{2n}$ in the monoid $\langle M_{2n} \cup \{a_{2n+1}\} \rangle$, we can write $a = \sum_{i=0}^{2n+2} c_i a_i$ for some $c_0, \dots, c_{2n+2} \in \nn_0$ such that $c_{2n+1} > 0$ and $c_{2n+2} > 0$. Now, after replacing $a_{2n+1}$ by $m_{2n} a_{2n} - m_{2n+2} a_{2n+2}$ in $a = \sum_{i=0}^{2n+2} c_i a_i$, we obtain the following:
	 \begin{equation} \label{eq:aux 6}
	 	(c_{2n+1} m_{2n+2} - c_{2n+2}) a_{2n+2} + a = (c_{2n+1} m_{2n} + c_{2n}) a_{2n} + \sum_{i=0}^{2n-1} c_i a_i.
	 \end{equation}
	Thus, $(c_{2n+1} m_{2n+2} - c_{2n+2}) a_{2n+2} + a \in \text{cone}(M_{2n})$. Since $a \in \mathcal{A}(M_{2n})$ and $a_{2n+2}$ does not divide~$a$ in $\langle M_{2n} \cup \{a_{2n+2}\} \rangle$, it follows from~\eqref{eq:aux 6} that the coefficient $c_{2n+1} m_{2n+2} - c_{2n+2}$ is positive. Therefore $(c_{2n+1} m_{2n+2} - c_{2n+2}) (a_{2n+2} + a) \in \text{cone}(M_{2n})$, which implies that $a_{2n+2} + a \in \text{cone}(M_{2n})$. However, this is a contradiction as, by Claim~1, the fact that $\norm{p_v(a_{2n+2})} > \ell$ implies that $a_{2n+2} + a \notin \text{cone}(M_{2n})$. Hence neither $a_{2n+1}$ nor $a_{2n+2}$ divide any of the elements $a_0, \dots, a_{2n}$ in the monoid $M_{2n+2}$.
	
	Since $\mathcal{A}(M_{2n}) = \{a_i \mid i \in \ldb 0,2n \rdb \}$, the statement proved in the previous paragraph guarantees that $ \{a_i \mid i \in \ldb 0,2n \rdb \} \subseteq \mathcal{A}(M_{2n+2})$. This, along with the fact that both $a_{2n+1}$ and $a_{2n+2}$ belong to $\mathcal{A}(M_{2n+2})$, allows us to conclude that $\mathcal{A}(M_{2n+2}) =  \{a_i \mid i \in \ldb 0,2n+2 \rdb \}$, which completes the proof of Claim~2.
	\smallskip
	
	Thus, there exists a sequence $(a_n)_{n \ge 0}$ of lattice points in $L_0^+$ satisfying the conditions (1)--(4) above. Now consider the lattice monoid $M := \bigcup_{n \in \nn_0} M_{2n}$. Observe that $M = \langle a_n \mid n \in \nn_0 \rangle$. Moreover, for each $k \in \nn_0$, the fact that $a_k \in \mathcal{A}(M_{2n})$ for all $n \in \nn_0$ with $k \le 2n$ immediately implies that $a_k \in \mathcal{A}(M)$. As a result, $\mathcal{A}(M) = \{a_n \mid n \in \nn_0 \}$, and so $M$ is atomic. To argue that $M$ does not satisfy the ACCP, consider the sequence $(m_{2n} a_{2n} + M)_{n \ge 0}$ of principal ideals of $M$. It follows from condition~(1) above that such a sequence is an ascending chain of principal ideals. On the other hand, for every $n \in \nn_0$ the inclusion $m_{2n} a_{2n} + M \subseteq m_{2n+2} a_{2n+2} + M$ is strict as $M$ is a reduced monoid and $m_{2n+2} a_{2n+2}$ properly divides $m_{2n} a_{2n}$ in~$M$. Hence $M$ contains an ascending chain of principal ideals that does not stabilize and, as a consequence, $M$ does not satisfy the ACCP.
	\smallskip
	
	Finally, fix $d \in \nn$ with $d \ge 2$ and consider the lattice submonoid $M_d := M \times \nn_0^{d-2}$ of $\zz^d$. It is clear that $\text{rank} \, M_d = d$. In addition, as both $M$ and $\nn_0^{d-2}$ are atomic, $M_d$ must be atomic. On the other hand, observe that $M$ is a divisor-closed submonoid of $M_d$. This, along with the fact that $M$ does not satisfy the ACCP, guarantees that $M_d$ does not satisfy the ACCP neither, which concludes the proof.
\end{proof}

It follows from~\cite[Theorem~3.1]{GV23} that every reduced torsion-free monoid is hereditarily atomic if and only if it satisfies the ACCP. Since the lattice monoids $M_d$ constructed in the proof of Proposition~\ref{prop:atomic monoid that is not ACCP} are reduced, we obtain the following corollary.

\begin{cor} \label{cor:atomic lattice monoid not ACCP}
	For each $d \in \nn_{\ge 2}$, there exists a rank-$d$ submonoid of $\zz^d$ that is atomic but not hereditarily atomic.
\end{cor}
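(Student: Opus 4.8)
The plan is to observe that this corollary follows almost immediately by combining Proposition~\ref{prop:atomic monoid that is not ACCP} with the cited equivalence from \cite[Theorem~3.1]{GV23}, which states that a reduced torsion-free monoid is hereditarily atomic if and only if it satisfies the ACCP. Concretely, I would fix $d \in \nn_{\ge 2}$ and take the lattice monoid $M_d \subseteq \zz^d$ produced in the proof of Proposition~\ref{prop:atomic monoid that is not ACCP}, which is a reduced rank-$d$ submonoid of $\zz^d$ that is atomic but fails the ACCP.

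The only hypothesis of the GV23 equivalence not already recorded in the statement of Proposition~\ref{prop:atomic monoid that is not ACCP} is that $M_d$ is torsion-free, and I would dispatch this in one line: since $M_d$ is a submonoid of the free abelian group $\zz^d$, its difference group $\gp(M_d)$ embeds into $\zz^d$, which is torsion-free, so $\gp(M_d)$ is torsion-free and hence $M_d$ is torsion-free by definition. Thus $M_d$ meets both standing hypotheses (reduced and torsion-free) of the equivalence.

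With this in place, I would apply the contrapositive of the GV23 equivalence: because $M_d$ is a reduced torsion-free monoid that does \emph{not} satisfy the ACCP, it cannot be hereditarily atomic. On the other hand, $M_d$ is atomic of rank $d$ by Proposition~\ref{prop:atomic monoid that is not ACCP}. Hence $M_d$ is the desired rank-$d$ submonoid of $\zz^d$ that is atomic but not hereditarily atomic, completing the argument.

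I do not anticipate a genuine obstacle here, since all the delicate work is carried out in the inductive geometric construction of Proposition~\ref{prop:atomic monoid that is not ACCP}; the corollary is essentially a formal consequence of that proposition together with the imported equivalence. The only point worth stating explicitly, as above, is the (routine) verification that $M_d$ is torsion-free, which is what licenses the invocation of \cite[Theorem~3.1]{GV23}.
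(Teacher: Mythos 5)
Your proposal is correct and follows exactly the route the paper takes: the paper also obtains the corollary by applying the equivalence of \cite[Theorem~3.1]{GV23} to the reduced (and, as you note, automatically torsion-free) lattice monoids $M_d$ from Proposition~\ref{prop:atomic monoid that is not ACCP}. Your explicit one-line check that $\gp(M_d)$ embeds in $\zz^d$ is a harmless addition the paper leaves implicit.
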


In the same direction of Corollary~\ref{cor:atomic lattice monoid not ACCP}, J. Correa-Morris and the author in~\cite[Proposition~4.10]{CG22} constructed, for each $d \in \nn$, a class of rank-$d$ atomic monoids that do not satisfy the ACCP. However, the monoids in Corollary~\ref{cor:atomic lattice monoid not ACCP} seem to be the first known atomic monoids that do not satisfy the ACCP whose difference groups are free.

\bigskip
\section{Group Algebras}
\label{sec:commutative group rings}

In this section, we consider hereditary atomicity in group algebras. Our primary purpose is to characterize the group algebras that are hereditarily atomic, which we will do in Subsection~\ref{subsec:HA group algebras}. In Subsection~\ref{subsec:inside group algebras that are not HA}, we first take a look inside group algebras that are not hereditarily atomic, and then we compare hereditary atomicity with the property of satisfying the ACCP.

\smallskip
\subsection{Hereditarily Atomic Group Algebras}
\label{subsec:HA group algebras}

To prove the following theorem, which is the primary result of this section, we will apply the characterization of hereditarily atomic abelian groups established in Theorem~\ref{thm:HA groups}.

\smallskip
\begin{prop}\label{prop:group rings}
	Let $R$ be an integral domain, and let $G$ be a nontrivial abelian group. Then the following conditions are equivalent.
	\begin{enumerate}
%
		\item[(a)] $R[G]$ is hereditarily atomic.
		\smallskip
		
		\item[(b)] $R$ is an algebraic extension of $\ff_p$ for some $p \in \pp$ and $G$ is the infinite cyclic group.
	\end{enumerate}
\end{prop}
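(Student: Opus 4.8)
The plan is to prove the two implications separately, in each case combining Theorem~\ref{thm:HA groups} with a transfer principle linking the additive monoid of exponents to the multiplicative monoid of monomials.

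For (b)$\Rightarrow$(a), assume $R$ is an algebraic extension of $\ff_p$ and $G\cong\zz$, so that $R$ is a field whose multiplicative group $R^\times$ is torsion (each nonzero element lies in a finite subfield of $R$, hence is a root of unity) and $R[G]=R[x,x^{-1}]$. I would show that every subring $S$ of $R[x,x^{-1}]$ satisfies the ACCP, which yields atomicity by \cite[Proposition~1.1.4]{GH06}. The tool is the additive length function $\ell\colon R[x,x^{-1}]^*\to\nn_0$ given by $\ell(f)=\deg f-\operatorname{ord} f$, which is additive because $R$ is a domain and whose zero set is exactly the unit group $U:=R[x,x^{-1}]^\times=\{ux^n : u\in R^\times,\ n\in\zz\}\cong R^\times\times\zz$ (here one uses that $R$ is a field, so every length-zero element $ax^n$ has $a\in R^\times$). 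Since the torsion subgroup of $U$ is $R^\times$ and $U/R^\times\cong\zz$ is cyclic, Theorem~\ref{thm:HA groups} shows $U$ is hereditary ACCP; hence the divisor-closed submonoid $N:=\ell^{-1}(0)=S^*\cap U$ of $S^*$ satisfies the ACCP. I would then take an ascending chain of principal ideals $(f_n)$ of $S^*$, note that the nonincreasing sequence $\ell(f_n)$ stabilizes from some $N_0$ on, pass to the cofactors $w_n:=f_{N_0}/f_n\in N$ (which form an ascending chain of principal ideals of $N$ since the successive quotients $f_n/f_{n+1}$ lie in $N$), and conclude from the ACCP of $N$ that the successive quotients are eventually units of $S$, so the original chain stabilizes.

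For (a)$\Rightarrow$(b), first note that hereditary atomicity presupposes $R[G]$ is an integral domain, forcing $G$ torsion-free. The engine is a monomial transfer lemma: for any submonoid $H$ of $G$ and any $m\in H$, the monomial $x^m$ factors into atoms of $R[H]$ if and only if $m$ factors into atoms of $H$. I would prove this by fixing a compatible total order on the torsion-free group $\gp(H)$ and using that $\operatorname{ord}$ and $\deg$ are additive, so that every factorization of a monomial consists of monomials whose coefficients multiply to a unit of $R$; thus factorizations of $x^m$ into non-units correspond exactly to additive factorizations of $m$ into non-units of $H$. Since $R[H]\subseteq R[G]$, hereditary atomicity of $R[G]$ makes each $R[H]$ atomic, hence each submonoid $H$ of $G$ atomic; therefore $G$ is hereditarily atomic and Theorem~\ref{thm:HA groups} gives that $G=G/T$ is cyclic, so $G\cong\zz$.

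It remains to show $R$ is an algebraic extension of $\ff_p$, equivalently that $\operatorname{char}R=p>0$ and $R$ contains no element transcendental over $\ff_p$; I would argue the contrapositive by exhibiting a non-atomic subring of $R[x,x^{-1}]$ whenever this fails. The uniform construction is an infinite-descent ring: given $a\in R$ transcendental over $\ff_p$ (which exists when $\operatorname{char}R=p$ but $R$ is not algebraic over $\ff_p$), let $S$ be the subring generated by $\ff_p$, $x$, and $\{ax^{-n}:n\ge 1\}$; then $\ff_p[a]$ is a polynomial ring, $x$ is a non-unit of $S$ because $x^{-1}\notin S$, and the witness $a=(ax^{-n})x^n$ has no factorization into atoms since its negative-degree part $ax^{-n}$ is always divisible by $x$. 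In characteristic $0$ one splits into the case $\qq\subseteq R$, handled by the Grams-style subring $\zz\big[\tfrac{x}{p} : p\in\pp\big]$ of $\qq[x,x^{-1}]$ in which $x$ admits no atomic factorization (mirroring Example~\ref{ex:atomic PM without ACCP}), and the case $\qq\not\subseteq R$, in which a rational prime $\pi$ is a non-unit of $R$ and the subring generated by $x$ and $\{\pi x^{-n}:n\ge 1\}$ is non-atomic by the same descent. The main obstacle is precisely this final step: carrying out the $\operatorname{ord}$/$\deg$ bookkeeping to confirm that these explicitly described subrings are genuinely non-atomic (not merely non-ACCP, as the monoids of Example~\ref{ex:atomic PM without ACCP} caution), and organizing the characteristic-zero cases cleanly; transcendence of $a$ (respectively the non-unit prime $\pi$) is exactly what prevents the descent from terminating.
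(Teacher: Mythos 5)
Your direction (a)$\Rightarrow$(b) is essentially correct and, where it departs from the paper, it departs in a reasonable way: the paper deduces that $R$ is an algebraic extension of $\ff_p$ by citing \cite[Theorem~5.10]{CGH21} for the Laurent subring $R[x^{\pm 1}]$, and deduces atomicity of submonoids of $G$ from atomicity of $R[M]$ by citing \cite[Proposition~1.4]{hK01}; you instead reprove the monomial transfer by the $\deg$/$\operatorname{ord}$ argument (which is fine) and replace the citation by explicit non-atomic subrings of $R[x^{\pm 1}]$. Your three ``infinite descent'' rings do work: in each case the ambient ring $\ff_p[a][x^{\pm 1}]$, $\qq[x]$, or $\zz[x^{\pm 1}]$ is a UFD in which $a$, $x$, or $\pi$ is prime, so every factorization of the witness into nonunits of $S$ contains a factor of the form $u\cdot(\text{prime})\cdot x^{m}$, and that factor is always properly divisible by $x$ (or by a fresh prime denominator in the $\zz[x/p]$ case), hence never an atom. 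This buys self-containedness at the cost of three case analyses; the paper buys brevity by outsourcing exactly this to \cite{CGH21}.

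The direction (b)$\Rightarrow$(a) is where your proposal has a genuine gap. The paper disposes of it in one line by citing \cite[Theorem~5.10]{CGH21}; you attempt to reprove that result, and the final step of your reduction is invalid. After the lengths $\ell(f_n)$ stabilize you correctly get that the cofactors $g_n := f_n/f_{n+1}$ lie in $N = S^* \cap U$, and correctly that $N$, being a submonoid of the hereditary ACCP group $U \cong R^\times \times \zz$, satisfies the ACCP. But the elements $w_n = f_{N_0}/f_n$ satisfy $w_{n+1} = w_n g_n$, so $w_n \mid_N w_{n+1}$ and the chain $(w_n N)_{n}$ is \emph{descending}, not ascending as you assert; the ACCP says nothing about it. More to the point, the ACCP of $N$ does not forbid multiplying together infinitely many nonunits of $N$ --- this already happens in $\nn_0$, which satisfies the ACCP --- so ``$g_n \in N$ for all large $n$ and $N$ satisfies the ACCP'' does not yield ``$g_n$ is eventually a unit of $S$.'' What is actually needed is that $f_{N_0}$ admits only boundedly many monomial divisors inside $S$ up to $S^\times$ (equivalently, that $\deg f_n$ cannot tend to $-\infty$ along the chain), and that is precisely the nontrivial content of \cite[Theorem~5.10]{CGH21}; its proof has to exploit that every coefficient is a root of unity, not merely the group-theoretic structure of $U$. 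As written, your argument would apply verbatim with $R = \qq$ replaced by any field whose unit group is torsion modulo nothing --- it never uses algebraicity over $\ff_p$ beyond making $R^\times$ torsion --- which is a warning sign, since the statement is sensitive to exactly that hypothesis. Either repair this step with the missing boundedness argument or cite \cite[Theorem~5.10]{CGH21} as the paper does.
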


\begin{proof}
	
	(a) $\Rightarrow$ (b): Suppose first that $R[G]$ is hereditarily atomic. Note that $G$ must be torsion-free because the existence of a nonzero element $g \in G$ with $ng = 0$ for some $n \in \nn_{\ge 2}$ would imply that
	\[
		(x-1)(x^{(n-1)g} + \dots + x + 1) = x^{ng} - 1 = 0,
	\]
	which is not possible because $R[G]$ is atomic and so an integral domain. Because $G$ is nontrivial and torsion-free, it must contain a subgroup isomorphic to $\zz$. Therefore $R[G]$ contains a subring isomorphic to the ring of Laurent polynomials $R[x^{\pm 1}]$. Since $R[G]$ is hereditarily atomic, $R[x^{\pm 1}]$ is also hereditarily atomic. Thus, it follows from \cite[Theorem~5.10]{CGH21} that $R$ is an algebraic extension of the field $\ff_p$ for some $p \in \pp$ (and so $R$ is a field). 
	\smallskip
	
	Before we can conclude that $G$ is an infinite cyclic group, we verify that $G$ is hereditarily atomic. To do so, suppose that $M$ is a submonoid of $G$, and consider the monoid algebra $R[M]$. Since $R[M]$ is a subring of $R[G]$, the former must be atomic domain. Thus, it follows from~\cite[Proposition~1.4]{hK01} that $M$ is atomic. Therefore~$G$ is hereditarily atomic, as claimed. Since $G$ is hereditarily atomic and torsion-free, it follows from Theorem~\ref{thm:HA groups} that $G$ is cyclic. Finally, the fact that $G$ is nontrivial and torsion-free guarantees that~$G$ is the infinite cyclic group.

	(b) $\Rightarrow$ (a): Suppose now that $R$ is an algebraic extension of the field $\ff_p$ for some $p \in \pp$ and also that~$G$ is the infinite cyclic group. 
	Since $G \cong \zz$, the group algebra $R[G]$ is isomorphic to the ring of Laurent polynomials over~$R$. Thus, it follows from \cite[Theorem~5.10]{CGH21} that $R[G]$ is hereditarily atomic, which concludes the proof. 
\end{proof}

In light of Proposition~\ref{prop:group rings}, every hereditarily atomic group algebra satisfies the ACCP.

\begin{cor} \label{cor:HA group domains are ACCP}
	Every hereditarily atomic group algebra satisfies the ACCP.
\end{cor}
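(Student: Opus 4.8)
The plan is to derive this corollary immediately from the classification of hereditarily atomic group algebras established in Proposition~\ref{prop:group rings}, after which the ACCP will follow from standard commutative algebra. So I would begin by assuming that $R[G]$ is a hereditarily atomic group algebra, understanding the group $G$ to be nontrivial (as in the hypothesis of Proposition~\ref{prop:group rings}). The equivalence (a) $\Leftrightarrow$ (b) of that proposition then applies and tells us precisely what $R[G]$ must be: the domain $R$ is an algebraic extension of $\ff_p$ for some $p \in \pp$ (and therefore $R$ is a field), while $G$ is the infinite cyclic group.

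With this structure in hand, the next step is to identify $R[G]$ explicitly. Because $G \cong \zz$, the group algebra $R[G]$ is isomorphic to the ring of Laurent polynomials $R[x^{\pm 1}]$ over the field $R$, exactly as observed in the proof of the implication (b) $\Rightarrow$ (a) of Proposition~\ref{prop:group rings}. I would then note that $R[x^{\pm 1}]$ is the localization of the polynomial ring $R[x]$ at the multiplicative set $\{x^n \mid n \in \nn_0\}$. Since $R$ is a field, $R[x]$ is a principal ideal domain, and hence so is its localization $R[x^{\pm 1}]$; in particular, $R[x^{\pm 1}]$ is Noetherian. As every Noetherian domain satisfies the ACCP, we conclude that $R[x^{\pm 1}]$, and therefore $R[G]$, satisfies the ACCP.

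There is no serious mathematical obstacle once Proposition~\ref{prop:group rings} is available: the real content of the corollary is the structural classification, and the passage to the ACCP is routine. The only point requiring a word of care is conventional rather than substantive, namely the degenerate case in which $G$ is trivial. There the ``group algebra'' $R[G]$ collapses to $R$ itself and falls outside the scope of Proposition~\ref{prop:group rings}; this case is not part of the intended assertion, since the relationship between hereditary atomicity and the ACCP for an arbitrary domain $R$ is precisely the open problem motivating the paper. Thus, for group algebras of nontrivial groups, the passage from \emph{hereditarily atomic} to \emph{satisfies the ACCP} is governed entirely by the explicit description furnished by Proposition~\ref{prop:group rings}.
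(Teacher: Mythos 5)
Your proof is correct and follows essentially the same route as the paper: both invoke Proposition~\ref{prop:group rings} to identify the algebra as $F[x^{\pm 1}]$ over an algebraic extension $F$ of $\ff_p$, and then conclude via a standard property of that ring (the paper cites that it is a UFD, you that it is a Noetherian PID; either way the ACCP is immediate).
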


\begin{proof}
	By virtue of Proposition~\ref{prop:group rings}, every hereditarily atomic group algebra is, up to isomorphism, a Laurent polynomial ring $F[x^{\pm 1}]$ with $F$ being an algebraic field extension of $\ff_p$ for some $p \in \pp$. Since $F[x^{\pm 1}]$ is a UFD, it must satisfy the ACCP.
\end{proof}

As the following example illustrates, the converse of Corollary~\ref{cor:HA group domains are ACCP} does not hold.

\begin{ex} \label{ex:group algebra ACCP but not HA}
	Let $F$ be a field that is not an algebraic extension of $\ff_p$ for any $p \in \pp$, and consider the group algebra $F[\zz]$, which can be identified with the Laurent polynomial ring $F[x^{\pm 1}]$. Since $F[x^{\pm 1}]$ is a UFD (or a Noetherian ring), it must satisfy the ACCP. However, it follows from Proposition~\ref{prop:group rings} that the group algebra $F[x^{\pm 1}]$ is not hereditarily atomic: indeed, we can directly see that $F[x^{\pm 1}]$ contains the non-atomic subring $\zz + x\qq[x]$ if $F$ has characteristic zero or the non-atomic subring $\ff_p[t] + x \ff_p(t)[x]$ if~$F$ has characteristic $p \in \pp$, where $t \in F$ is a transcendental element over $\ff_p$. In particular, not every atomic group algebra is hereditarily atomic.
\end{ex}

In the more general class of integral domains, we still do not know whether every hereditarily atomic domain satisfies the ACCP. The following question is a version of \cite[Conjecture~6.1]{CGH21}.

\begin{question}
	Does every hereditarily atomic domain satisfy the ACCP?
\end{question}

Being a particular case of the group algebra in Example~\ref{ex:group algebra ACCP but not HA}, the group algebra $\qq[\zz]$ is an atomic domain that is not hereditarily atomic. Moreover, from the fact that the unique factorization property ascends from an integral domain to its Laurent polynomial extensions in finitely many variables, one can readily deduce that for each $k \in \nn_{\ge 2}$, the group algebra $F[\zz^k]$ is atomic, but it is not hereditarily atomic by virtue of Proposition~\ref{prop:group rings}. There are, on the other hand, group algebras (over fields) that are not even atomic. The next proposition illustrates the latter observation.

Recall that a field $F$ is perfect provided that each irreducible polynomial over $F$ has distinct roots. It is well known that a field $F$ is perfect if and only if either $F$ has characteristic zero or $F$ has prime characteristic $p$ and every element of $F$ can be written as a $p$-th power in $F$. In addition, recall that a field $F$ is called a real closed field if $F$ is not algebraically closed, but its field extension $F(\sqrt{-1})$ is algebraically closed. It is not hard to verify that every irreducible polynomial over a real closed field has degree at most~$2$. We are in a position to prove the following proposition, which yields classes of antimatter group algebras.

\begin{prop}  \label{prop:antimatter group algebras} 
	For a field $F$ and a torsion-free abelian group $G$, the following statements hold.
	\begin{enumerate}
		\item If $F$ is perfect of characteristic $p \in \pp$ and $G$ is $p$-divisible, then $F[G]$ is antimatter.
		\smallskip
		
		\item If $F$ is real closed and $G$ is non-cyclic with rank $1$, then $F[G]$ is antimatter.
		\smallskip
		
		\item If $F$ is algebraically closed and $G$ is non-cyclic with rank $1$, then $F[G]$ is antimatter.
	\end{enumerate}
\end{prop}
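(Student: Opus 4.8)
The plan is to reduce antimatter-ness to a factorization statement about non-monomials. Since $G$ is torsion-free, $F[G]$ is an integral domain, and by the cited formula for $R[M]^\times$ (with $\uu(G) = G$) its units are exactly the monomials $ux^g$ with $u \in F^\times$ and $g \in G$. Hence a nonzero element of $F[G]$ fails to be a unit precisely when it involves at least two distinct exponents, and proving that $F[G]$ is antimatter (i.e. $\mathcal{A}(F[G]) = \varnothing$) amounts to showing that every such element factors as a product of two non-units. I would treat (1) separately from (2) and (3).

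For (1) I would exploit the Frobenius endomorphism. Writing $f = \sum_i \alpha_i x^{g_i}$ with the $g_i \in G$ distinct and $\alpha_i \in F^\times$, the $p$-divisibility of $G$ lets me write $g_i = p h_i$ with $h_i \in G$ (unique and distinct by torsion-freeness), while perfectness of $F$ lets me write $\alpha_i = \beta_i^p$. Setting $g := \sum_i \beta_i x^{h_i}$, the identity $(z_1 + \cdots + z_k)^p = z_1^p + \cdots + z_k^p$ in characteristic $p$ gives $g^p = f$. If $f$ is a non-unit then $g$ is a non-monomial, hence a non-unit, so $f = g \cdot g^{p-1}$ exhibits $f$ as a product of non-units (as $p \ge 2$). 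Thus every nonzero non-unit is reducible and $F[G]$ is antimatter.

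For (2) and (3) I would first reduce every nonzero non-unit $f$, up to associates, to a genuine one-variable polynomial evaluated at a monomial. The finitely many exponents of $f$ generate a finite-rank torsion-free, hence cyclic, subgroup $\zz\delta$ of $G$; factoring out the lowest power of $x^\delta$ writes $f = (\text{unit}) \cdot P(x^\delta)$ with $P \in F[y]$, $P(0) \neq 0$, and $\deg P \ge 1$. The decisive move is to invoke non-cyclicity: since $\zz\delta \subsetneq G$ and, by the structure of rank-$1$ groups recalled in the proof of Theorem~\ref{thm:HA groups}, $G$ is a strictly ascending union of infinite cyclic groups, I can write $\delta = n\epsilon$ for some $\epsilon \in G$ and integer $n \ge 3$ (two successive steps of the union already force index $\ge 4$). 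Substituting $x^\delta = (x^\epsilon)^n$ turns $g := P(x^\delta)$ into $\tilde P(x^\epsilon)$, where $\tilde P(z) := P(z^n)$ satisfies $\deg \tilde P = n \deg P \ge 3$ and $\tilde P(0) = P(0) \neq 0$.

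The field hypotheses then finish the job uniformly: over an algebraically closed field $\tilde P$ splits into linear factors, and over a real closed field into irreducible factors of degree at most $2$ (as recalled just before the statement); since $\deg \tilde P \ge 3$, in either case $\tilde P = c\prod_j \tilde P_j$ with $c \in F^\times$ and at least two non-constant factors $\tilde P_j$. Because $\tilde P(0) \ne 0$, each $\tilde P_j(0) \ne 0$, so $z \nmid \tilde P_j$ and each $\tilde P_j(x^\epsilon)$ has both a nonzero constant term and a genuine positive-exponent term, i.e. is a non-monomial and hence a non-unit. Therefore $g$, and so $f$, factors into non-units, giving $\mathcal{A}(F[G]) = \varnothing$. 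I expect the main obstacle to be exactly this step over a real closed field: an individual factor $x^\delta - r$ need not split (for instance $y^2 + 1$ is irreducible), so one cannot simply mimic the algebraically closed argument; the substitution $\delta = n\epsilon$ is what pushes the degree past $2$ and forces the real-closed factorization to be nontrivial, and securing such an $\epsilon$ with $n \ge 3$ is precisely where the non-cyclicity of $G$ enters.
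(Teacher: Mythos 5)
Your proposal is correct and follows essentially the same route as the paper: the Frobenius factorization $f=g^p$ for part (1), and for parts (2)--(3) normalizing $f$ to have nonzero "constant term," rewriting it as a univariate polynomial of degree at least $3$ in a monomial $x^\epsilon$ by descending in the strictly ascending union of cyclic subgroups of $G$, and then splitting off at least two non-monomial factors using the degree bound on irreducibles over real closed or algebraically closed fields. The only cosmetic difference is that the paper chooses the generator $g_N$ far enough along the union so that $\deg h = q_n/g_N \ge 3$ directly, whereas you first pass to the cyclic group generated by the exponents and then step up twice; both hinge on exactly the same two facts.
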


\begin{proof}
	(1) Fix $p \in \pp$ such that $F$ is a perfect field of characteristic $p$ and $G$ is a $p$-divisible abelian group. To argue that $F[G]$ is antimatter, fix a nonzero nonunit $f \in F[G]$, and then write $f$ as $f = \alpha_n x^{g_n} + \dots + \alpha_1 x^{g_1}$ for some nonzero coefficients $\alpha_1, \dots, \alpha_n \in F$ and some distinct exponents $g_1, \dots, g_n \in G$. Since $F$ is a perfect field of characteristic $p$, we can pick $\beta_1, \dots, \beta_n \in F$ satisfying that $\alpha_i = \beta_i^p$ for every $i \in \ldb 1,n \rdb$. In addition, as $G$ is $p$-divisible, we can take $h_1, \dots, h_n \in G$ such that $p h_i = g_i$ for every $i \in \ldb 1,n \rdb$. Now we can write $f = \big( \beta_n x^{h_n} + \dots + \beta_1 x^{h_1} \big)^p$ to infer that $f$ is not an irreducible element in $F[G]$. Hence $F[G]$ is an antimatter domain.
	\smallskip
	
	(2) Suppose now that $F$ is real-closed and also that $G$ is non-cyclic and has rank~$1$. As $G$ is a rank-$1$ torsion-free abelian group, by virtue of \cite[Section~24]{lF70} we can identify $G$ with an additive subgroup of $\qq$. As every additive subgroup of $\qq$ is the ascending union of infinite cyclic subgroups, the fact that $G$ is not cyclic guarantees the existence of a sequence $(g_n)_{n \ge 1}$ with terms in $G \cap \qq_{> 0}$ such that $G = \bigcup_{n \ge 1} \zz g_n$ and $\zz g_n \subsetneq \zz g_{n+1}$ for every $n \in \nn$. To argue that $F[G]$ is antimatter, fix a nonzero nonunit $f \in F[G]$ and write $f$ canonically as $f = \alpha_n x^{q_n} + \dots + \alpha_0 x^{q_0}$ for some nonzero $\alpha_0, \dots, \alpha_n \in F$ and $q_0, \dots, q_n \in G$ such that $q_n > \dots > q_0$. As $x^u$ is a unit in $F[G]$ for every $u \in G$, after replacing~$f$ by its associate $x^{-q_0} f$ if necessary, we can assume that $\text{ord} \, f = 0$. Fix now $g := g_N$ with $N \in \nn$ large enough so that $q_1, \dots, q_n \in \nn g$ and $q_n \ge 3g$. As a result, we can write $f = h(x^{g})$ for some $h \in F[x]$.  Since $F$ is a real closed field, every irreducible polynomial in $F[x]$ has degree at most~$2$. As $\deg \, h = q_n/g \ge 3$, we see that $h$ is not irreducible in $F[x]$. Therefore we can write $h(x) = b(x) c(x)$ for some polynomials $b(x), c(x) \in F[x] \setminus F$, and so $f = h(x^{g}) = b(x^{g}) c(x^{g})$. Since $\text{ord} \, h = \text{ord} \, f = 0$, neither $b(x)$ nor $c(x)$ is a monomial in $F[x]$ and, therefore, neither $b(x^{g})$ nor $c(x^{g})$ is a unit in $F[G]$. Thus, the equality $f = b(x^{g}) c(x^{g})$ ensures that $f$ is not an irreducible element in $F[G]$. Hence $F[G]$ is an antimatter domain.
	\smallskip 
	
	(3) For this part one can mimic the argument just given for part~(2), using the fact that every irreducible polynomial in $F[x]$ has degree $1$ when $F$ is algebraically closed.
\end{proof}

The interested reader can find in~\cite{ACHZ07} various methods to construct examples and classes of antimatter monoid algebras, including constructions that are parallel to those given in Proposition~\ref{prop:antimatter group algebras} .
\smallskip

It is worth noting that the group algebras characterized in Proposition~\ref{prop:group rings} as well as the group algebras given in Proposition~\ref{prop:antimatter group algebras} are either UFDs or antimatter domains, which are extreme conditions in the spectrum of atomicity. As of now, we have been unable to construct group algebras over fields that are atomic but not UFDs. We finish this subsection with the following question.

\begin{question}
	Do there exist a field $F$ and an abelian group $G$ such that the group algebra $F[G]$ is atomic but not a UFD?
\end{question}

\medskip
\subsection{A Look Inside Group Algebras That Are Not Hereditarily Atomic}
\label{subsec:inside group algebras that are not HA}

With notation as in Proposition~\ref{prop:group rings}, we know that if either $R$ is not an algebraic extension of $\ff_p$ for any $p \in \pp$ or $G$ is not an infinite cyclic group, then the group algebra $R[G]$ is not hereditarily atomic. In particular, the group algebras $\qq[\zz]$ and $\ff_p[\zz^2]$ are not hereditarily atomic, and it follows from parts~(1), (2), and~(3) of Proposition~\ref{prop:antimatter group algebras} that the group algebras $\ff_p[\qq]$, $\rr[\qq]$, and $\cc[\qq]$, respectively, are not even atomic. In this subsection, we find hereditarily atomic monoid algebras inside special cases of non-hereditarily atomic group algebras. We do this in Examples~\ref{ex:non-algebraic extension of F_p} and~\ref{ex:non-cyclic group} using, as a main tool, the next proposition. Then we say a few words about the known atomic monoid algebras that do not satisfy the ACCP, and we finish the subsection proposing potential candidates of atomic monoid algebras that do not satisfy the ACCP inside the non-hereditarily atomic group algebras $\ff_2[\qq]$ and $\ff_2[\zz^2]$.

\begin{prop} \cite[Proposition~2.1]{aG74} \label{prop:ACCP subrings}
	Let $R$ be an integral domain satisfying the ACCP. If a subring~$S$ of $R$ satisfies that $S^\times = R^\times \cap S$, then $S$ also satisfies the ACCP.
\end{prop}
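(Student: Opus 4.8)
The plan is to lift an ascending chain of principal ideals of $S$ to an ascending chain of principal ideals of $R$, apply the ACCP of $R$ there, and then use the hypothesis $S^\times = R^\times \cap S$ to descend the resulting stabilization back to $S$.

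First I would fix an arbitrary ascending chain of principal ideals $a_1 S \subseteq a_2 S \subseteq \cdots$ of $S$. By definition of the divisibility order, this means that for each $n \in \nn$ there is some $s_n \in S$ with $a_n = a_{n+1} s_n$; in particular $a_{n+1} \mid_R a_n$, so the very same elements produce an ascending chain $a_1 R \subseteq a_2 R \subseteq \cdots$ of principal ideals of $R$. Since $R$ satisfies the ACCP, this chain stabilizes: there is $N \in \nn$ such that $a_n R = a_N R$ for all $n \ge N$. The case $a_N = 0$ is immediate, because then $a_n R = (0)$ and hence $a_n = 0$ for every $n \ge N$, so the original chain in $S$ is already constant from $N$ on; I would therefore assume $a_N \neq 0$, which forces $a_n \neq 0$ for all $n \ge N$.

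Then for each $n \ge N$ the equality $a_n R = a_{n+1} R$ says that $a_n$ and $a_{n+1}$ are associates in $R$; combined with $a_n = a_{n+1} s_n$ and the fact that $R$ is a domain with $a_{n+1} \neq 0$, this forces $s_n \in R^\times$. Since also $s_n \in S$, the hypothesis $S^\times = R^\times \cap S$ yields $s_n \in S^\times$. Hence $a_n$ and $a_{n+1}$ are associates in $S$, that is, $a_n S = a_{n+1} S$ for all $n \ge N$, and the chain stabilizes in $S$. As the chain was arbitrary, $S$ satisfies the ACCP.

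The argument is essentially free of technical obstacles; the only real content is recognizing that the hypothesis $S^\times = R^\times \cap S$ is precisely the mechanism needed to promote a multiplier that is a unit of $R$ and happens to lie in $S$ to a genuine unit of $S$. Without this hypothesis the element $s_n$ could be a nonunit of $S$ while being a unit of $R$, allowing the $S$-chain to grow indefinitely even though the $R$-chain has already stabilized. The remaining care points are minor: handling the zero element, and noting that the inclusion $S^\times \subseteq R^\times \cap S$ is automatic, so only the reverse inclusion $R^\times \cap S \subseteq S^\times$ is actually invoked.
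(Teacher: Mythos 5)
Your proof is correct; note that the paper itself gives no proof of this proposition but simply cites it from Grams' paper \cite[Proposition~2.1]{aG74}, and your argument is exactly the standard one used there: lift the chain to $R$, stabilize by the ACCP, cancel to see each multiplier $s_n$ lies in $R^\times \cap S = S^\times$, and descend. The handling of the zero case and the observation that only the inclusion $R^\times \cap S \subseteq S^\times$ is actually needed are both accurate.
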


When a field $F$ is not an algebraic extension of $\ff_p$ for any $p \in \pp$, 
it follows from Proposition~\ref{prop:group rings} that the group algebra $F[\zz]$ is not hereditarily atomic. However, even in this case, we can identify nontrivial subrings of $F[\zz]$ that are hereditarily atomic. 

\begin{ex} \label{ex:non-algebraic extension of F_p} 
	Let $F$ be a field that is not an algebraic extension of $\ff_p$ for any $p \in \pp$. Explicit non-atomic subrings of $F[\zz]$ were identified in Example~\ref{ex:group algebra ACCP but not HA}. On the other hand, for any $y \in F[\zz]$ that is transcendental over $F$, the subring $\zz[y]$ (resp., $\ff_p[y]$) of $F[\zz]$ is hereditarily atomic provided that $F$ has characteristic zero (resp., characteristic $p \in \pp$): indeed, in light of Proposition~\ref{prop:ACCP subrings}, every subring of $\zz[y]$ (resp., $\ff_p[y]$) satisfies the ACCP as it is a UFD and every subring $S$ of $\zz[y]$ (resp., $\ff_p[y]$) satisfies that $S^\times = \{\pm 1\} = \zz[y]^\times \cap S$ (resp., $S^\times = \ff_p^\times = \ff_p[y]^\times \cap S$).
\end{ex}

When $G$ is an Archimedean ordered group that is not cyclic, it follows from Proposition~\ref{prop:group rings} that the group algebra $\ff_p[G]$ is not hereditarily atomic for any $p \in \pp$. For each $p \in \pp$, in the next example we identify a class of hereditarily atomic subrings of $\ff_p[G]$ for every (non-cyclic) Archimedean ordered group $G$.

\begin{ex} \label{ex:non-cyclic group}
	Fix $p \in \pp$. Let $G$ be an Archimedean group and identify $G$ with an additive subgroup of $\rr$. Now fix $\epsilon \in \rr_{> 0}$, and consider the monoid $M := \{0\} \cup G_{\ge \epsilon}$. Take a sequence $(b_n)_{n \ge 1}$ with terms in $M$ such that  $(b_n + M)_{n \ge 1}$ is an ascending chain of principal ideals. Since $(b_n)_{n \ge 1}$ is a decreasing sequence of positive real numbers, it must converge. This implies that the sequence $(b_n - b_{n+1})_{n \ge 1}$, whose terms belong to~$M$, converges to zero, and so the fact that $\min M^\bullet = \epsilon$ guarantees the existence of $N \in \nn$ such that $b_n = b_N$ for every $n \ge N$. Hence the chain of principal ideals $(b_n + M)_{n \ge 1}$ stabilizes. Therefore~$M$ satisfies the ACCP. Because $M$ is a reduced monoid, it follows from \cite[Theorem~13]{AJ15} that the monoid algebra $\ff_p[M]$ also satisfies the ACCP. Using, once again, the fact that $M$ is reduced, we obtain that $\ff_p[M]^\times \cap S = \ff_p^\times \cap S = S^\times$ for every subring $S$ of $\ff_p[M]$. Hence $\ff_p[M]$ is hereditarily atomic by virtue of Proposition~\ref{prop:ACCP subrings}.
\end{ex}

Instead of hereditarily atomic monoid algebras, let us turn now to identify atomic monoid algebras that do not satisfy the ACCP (we keep on searching inside group algebras that are not hereditarily atomic). As mentioned in the introduction, not every atomic domain satisfies the ACCP. Let us briefly describe the first know example, which was constructed by Grams in~\cite{aG74}.

\begin{ex}[Grams' Construction] \label{ex:Grams' construction}
	Let $(p_n)_{n \ge 1}$ be the strictly increasing sequence whose underlying set is $\pp \setminus \{2\}$, and consider the monoid $M := \big\langle \frac 1{2^{n-1} p_n} \mid n \in \nn \big\rangle$ from part~(a) of Example~\ref{ex:atomic PM without ACCP}. Now fix a field $F$ and consider the localization $F[M]_S$ of the monoid algebra $F[M]$ at its multiplicative subset $S := \{f \in F[M]^* \mid \text{ord} \, f = 0 \}$. It follows from~\cite[Theorem~1.3]{aG74} that $F[M]_S$ is atomic. However, from the fact that $M$ does not satisfy the ACCP, one can readily deduce that $F[M]_S$ does not satisfy the ACCP. In~\cite[Theorem~3.3]{GL22}, Li and the author provided a generalization of the Grams' construction $F[M]_S$.
\end{ex}
Although Grams' construction does not lead to a monoid algebra (but to a localization of a monoid algebra), examples of atomic monoid algebras that do not satisfy the ACCP have also been found. Indeed, with notation as in Example~\ref{ex:Grams' construction}, it was proved by Li and the author in~\cite[Proposition~3.6]{GL22a} that when $F$ has characteristic zero, the ring of polynomials $R[x]$, where $R := F[M]_S$, is atomic but does not satisfy the ACCP (this seems to be the first known ring of polynomials with this property). The first atomic monoid algebra over a field that does not satisfy the ACCP was constructed by Zaks in~\cite[Example~2]{aZ82}, and it is a monoid algebra with an infinite-rank monoid of exponents. An alternative construction was recently given by Li and the author in~\cite[Section~4]{GL22}. An improved version of the latter construction was given even more recently by the same authors in~\cite[Theorem~5.15]{GL23}: it consists of a monoid algebra (over a field) whose monoid of exponents has finite rank (in fact, rank~$2$). We proceed to briefly outline both Zaks' example and the rank-$2$ example of atomic monoid algebras that do not satisfy the ACCP.

\begin{ex}[Zaks' Construction] \label{ex:Zaks' example}
	Let $F$ be a field, and consider the set of indeterminates $u,v,w$, and $x_n$ for all $n \in \nn$ over $F$. Now set $y_n := uv / (w^n x_n)$ for every $n \in \nn$, and let $R$ be the smallest subring of $F[u,v, w^{\pm 1}, x_n^{\pm 1}]_{n \in \nn}$ containing the set $\{u, v, w, x_n, y_n \mid n \in \nn \}$. It was proved by Zaks in~\cite[Example~2]{aZ82} that~$R$ is an atomic domain that does not satisfy ACCP. Moreover it was first observed in~\cite[Proposition~5.18]{GL23} that $R$ is indeed an infinite-rank monoid algebra. Indeed, after setting $M := \nn \oplus \nn \oplus \zz \oplus \zz^{\oplus \nn}$, we can think of the ring $F[u, v, w^{\pm 1}, x_n^{\pm 1}]$ as the monoid algebra $F[M]$ in a variable $z$, where $z^m = u^a v^b w^c (\prod_{n \in \nn} x_n^{d_n})$ for every element $m = (a,b,c, (d_n)_{n \in \nn}) \in M$ (here all but finitely many entries $d_n$ are zero), and so $R$ is the monoid algebra $F[N]$ in the variable $z$, where $N$ is the submonoid of $M$ generated by the elements $(1,0,0,0, \ldots)$, $(0,1,0,0, \ldots)$, $(0,0,1,0, \ldots)$, $e_{n+3} := (0,0,0,\dots, 0,1,0, \dots)$, and $f_n := (-n, 1, 1, 0, 0, \ldots, 0, -1, 0, \ldots)$, where the entry $1$ of $e_{n+3}$ and the entry $-1$ of $f_n$ appear in the $(n+3)$-th position for every $n \in \nn$. Observe that the monoid~$N$ has infinite rank, so $R$ is an infinite-rank monoid algebra.
\end{ex}

\begin{ex}[Gotti-Li's Construction] \label{ex:Gotti-Li's example}
	Let $(p_n)_{n \ge 1}$ be a strictly increasing sequence of odd primes such that $\sum_{n=1}^\infty \frac 1{p_n} < \frac{1}{3}$, and consider the additive submonoid $P = \big\langle \frac{1}{p_n} \mid n \in \nn \big\rangle$ of $\qq$. It is not hard to check that every $q \in P$ can be written in an essentially unique way as
	\begin{equation} \label{eq:unique decomposition}
		q = n_0 + \sum_{i=1}^\ell \frac{n_i}{p_i},
	\end{equation}
	where $n_0, n_1, \dots, n_\ell \in \nn_0$ and $n_i \in \ldb 0, p_i - 1 \rdb$ for every $i \in \ldb 1, \ell \rdb$. From the uniqueness of such a representation, we can deduce that $\mathcal{A}(P) = \big\{ \frac 1{p_n} \mid n \in \nn \big\}$ and also that $P$ satisfies the ACCP (see \cite[Example~2.1]{AAZ90} and \cite[Proposition~4.2]{fG22}). Now consider the following subset of $P$:
	\[
		A := \bigg\{ \frac 1{p_{j_k}} + \sum_{i=1}^\ell \frac{1}{p_{j_i}}  \ \Big{|} \ k, \ell \in \nn \text{ with } k \in \ldb 1, \ell \rdb \text{ and } j_1 < j_2 < \cdots < j_\ell \bigg\};
	\]
	that is, each element of $A$ is the sum of finitely many atoms of $P$, where exactly one of them repeats, and it repeats exactly twice. In addition, let $\beta$ be an irrational number such that $\beta > 1$, and consider the following subset of $\nn_0 \beta + \qq$:
	\[
		B := \{ \beta \} \bigcup \bigg\{ \beta - \sum_{i=1}^\ell \frac{1}{p_i}  \ \Big{|} \ \ell \in \nn \bigg\}.
	\]
	Now let $M$ denote the positive monoid generated by the set $A \cup B$, and observe that $M$ has rank~$2$. It follows from \cite[Theorem~5.15]{GL23} that, for any field $F$, the monoid algebra $F[M]$ is atomic but does not satisfy the ACCP.
\end{ex}

Let $M$ be one of the monoids described in Example~\ref{ex:atomic PM without ACCP} or one of the monoids constructed in Proposition~\ref{prop:atomic monoid that is not ACCP}. We have seen in the same example/proposition that~$M$ is an atomic monoid that does not satisfy the ACCP. Therefore the monoid algebra $F[M]$ does not satisfy the ACCP for any field~$F$. In connection with Examples~\ref{ex:Zaks' example} and~\ref{ex:Gotti-Li's example}, we pose the following questions.

\begin{question} \label{quest:new atomic monoid algebras without ACCP} \hfill
	\begin{enumerate}
		\item For which of the monoids $M$ described in Example~\ref{ex:atomic PM without ACCP}, is the monoid algebra $\ff_2[M]$ atomic?
		\smallskip
		
		\item Let $M_2$ be the rank-$2$ monoid constructed in the proof of Proposition~\ref{prop:atomic monoid that is not ACCP}. Is the monoid algebra $\ff_2[M_2]$ atomic?
	\end{enumerate}
\end{question}

We have chosen $\ff_2$ in the statement of Question~\ref{quest:new atomic monoid algebras without ACCP} only for the sake of simplicity. The corresponding questions for any other fields are still open, and answers for other fields are equally interesting in the following way. A positive answer to part~(2) of Question~\ref{quest:new atomic monoid algebras without ACCP} would yield the second example of a two-dimensional atomic monoid algebra (over a field) not satisfying the ACCP (the first one is the one in Example~\ref{ex:Gotti-Li's example}). More importantly, a positive answer to part~(1) of Question~\ref{quest:new atomic monoid algebras without ACCP} (for any of the three monoids) would yield the first ever known one-dimensional atomic monoid algebra not satisfying the ACCP. Finally, any negative answer to any of the parts of Question~\ref{quest:new atomic monoid algebras without ACCP} would identify an atomic monoid with a monoid algebra over a field that is not atomic (the first and only examples known were constructed by Coykendall and the author in~\cite{CG19}, partially answering a question about the atomicity of monoid algebras posed by Gilmer~\cite[page 189]{rG84} back in the eighties.

\medskip
\subsection{Hereditary ACCP and the Bounded Factorization Property}
\label{subsec:HACCP and the BFP}

In the same way that we have defined hereditarily atomic domains and hereditary ACCP monoids, we can go further and define hereditary ACCP domains. Following~\cite{CGH21}, we say that an integral domain~$R$ is \emph{hereditary ACCP} provided that every subring of~$R$ satisfies the ACCP. By definition, every hereditary ACCP domain must satisfy the ACCP. Another relevant class of integral domains satisfying the ACCP is that consisting of all bounded factorization domains. For an atomic domain $R$ and a nonzero nonunit $r \in R$, a \emph{factorization length} of $r$ is the number of irreducibles (counting repetitions) in a factorization of $r$, and we let $\mathsf{L}(r) \subseteq \nn$ denote the set of all factorization lengths of $r$. Following~\cite{AAZ90}, we say that an atomic domain $R$ is a \emph{bounded factorization domain} (BFD) if $\mathsf{L}(r)$ is finite for every nonzero nonunit $r \in R$. It is clear that every UFD is a BFD, and it is not hard to show that every BFD satisfies the ACCP (see \cite[Corollary~1]{fHK92}). As a result, we obtain the diagram of nested classes of atomic domains illustrated in Figure~\ref{fig:AAZ chain}, which was first considered in the landmark paper~\cite{AAZ90} (as part of a larger diagram) and since then has established a methodology to study the deviation of atomic domains (and also monoids) from satisfying the unique factorization property.

\begin{figure}[h]
	\begin{equation} \label{diag:AAZ's atomic chain until bi-FFM}
		\begin{tikzcd}
			\textbf{ UFD } \arrow[r, Rightarrow]  \arrow[red, r, Leftarrow, "/"{anchor=center,sloped}, shift left=1.7ex] & \textbf{ BFD } \arrow[r, Rightarrow]  \arrow[red, r, Leftarrow, "/"{anchor=center,sloped}, shift left=1.7ex] & \textbf{ ACCP domain}  \arrow[r, Rightarrow] \arrow[red, r, Leftarrow, "/"{anchor=center,sloped}, shift left=1.7ex]  & \textbf{ atomic domain}
		\end{tikzcd}
	\end{equation}
	\caption{The implications in the diagram determine nested classes of atomic domains. The broken red double arrows indicate that none of the displayed implications is reversible (counterexamples witnessing the broken arrows, from left to right, are given in~\cite[Example~4.7]{AG22}, Example~\ref{ex:HACCP and BF stronger than ACCP}(b) below, and~\cite[Theorem~1.3]{aG74}.}
	\label{fig:AAZ chain}
\end{figure}
\noindent The property of being hereditary ACCP and that of being a BFD are strictly stronger than the property of satisfying the ACCP. Indeed, none of the two former properties implies the other in the context of integral domains (or even in the context of monoid algebras over fields). The following examples shed some light upon this observation.

\begin{ex} \label{ex:HACCP and BF stronger than ACCP} \hfill
	\begin{enumerate}
		\item[(a)] The ring of Laurent polynomials $\qq[x^{\pm 1}]$ is a UFD and so a BFD. However, as $\qq[x^{\pm 1}]$ has characteristic zero, it follows from Proposition~\ref{prop:group rings} that it is not hereditarily atomic. Hence $\qq[x^{\pm 1}]$ is not hereditary ACCP.
		\smallskip
		
		\item[(b)] Consider the additive submonoid $M := \big\langle \frac 1p \mid p \in \pp \big\rangle$ of $\qq$. It is well known and not hard to prove that $M$ is atomic with $\mathcal{A}(M) = \big\{ \frac 1p \mid p \in \pp \big\}$. Furthermore, $M$ satisfies the ACCP (see \cite[Example~2.1]{AAZ90} or the proof of \cite[Proposition~4.2]{fG22} for more details). Now fix $p \in \pp$. As $M$ is a reduced monoid satisfying the ACCP, it follows from \cite[Theorem~13]{AJ15} that the monoid algebra $\ff_p[M]$ also satisfies the ACCP. Moreover, since every subring $S$ of $\ff_p[M]$ satisfies that $S^\times = \ff_p^\times = \ff_p[M]^\times \cap S$, it follows from Proposition~\ref{prop:ACCP subrings} that $\ff_p[M]$ is hereditary ACCP. However, $\ff_p[M]$ is not a BFD: indeed, $x = \big( x^{1/p}\big)^p$ for every prime $p$, and so $\mathsf{L}(x) = \pp$.
	\end{enumerate}
\end{ex}

\bigskip
\section*{Acknowledgments}

While working on this paper, the author was kindly supported by the NSF awards DMS-1903069 and DMS-2213323.
\bigskip

\bigskip

\end{document}